\documentclass[]{article}

\usepackage{tikz}
\usepackage{graphicx}
\usepackage{subfigure}
\usepackage{booktabs}
\usepackage{hyperref}

\usepackage{times}

\usepackage{fancyhdr}
\usepackage{color}
\usepackage{tikz}
\usepackage{algorithm}
\usepackage{algorithmic}
\usepackage{natbib}
\usepackage{authblk}

\newcommand{\ind}[1]{\mathbb I(#1)}
\newcommand{\eig}{\mathtt{eig}}
\newcommand{\eps}{\varepsilon}
\newcommand{\R}{\mathbb R}
\newcommand{\M}{\mathcal M}
\newcommand{\Y}{\mathcal Y}
\newcommand{\Z}{\mathcal Z}
\newcommand{\dedge}{d_\text{edge}}
\newcommand{\dnode}{d_\text{node}}
\newcommand{\Meps}{\M_\eps}
\newcommand{\deps}{d_\eps}
\newcommand{\depsMeps}{\deps \circ \Meps}
\newcommand{\nmin}{n_\text{min}}
\newcommand{\nmax}{n_\text{max}}
\newcommand{\nmintilde}{\tilde n_\text{min}}
\newcommand{\SBM}{\mathrm{SBM}}
\newcommand{\SSBM}{\mathrm{SSBM}}
\newcommand{\DCBM}{\mathrm{DCBM}}
\newcommand{\SDCBM}{\mathrm{SDCBM}}
\newcommand{\diag}{\mathrm{diag}}
\newcommand{\erdosrenyi}{Erd\H{o}s--R\'enyi}

\usepackage[utf8]{inputenc}
\usepackage{amssymb,amsmath,amsthm}
\newtheorem{definition}{Definition}
\newtheorem{theorem}{Theorem}[section]

\newtheorem{lemma}[theorem]{Lemma}
\newtheorem{corollary}[theorem]{Corollary}
\newtheorem{fact}[theorem]{Fact}

\title{Consistent Spectral Clustering of Network Block Models under Local Differential Privacy}
\author{Jonathan Hehir}
\author{Aleksandra Slavkovi{\'c}}
\author{Xiaoyue Niu}
\affil{Department of Statistics, Pennsylvania State University, University Park, PA}
\date{\today}

\begin{document}

\maketitle

\begin{abstract}
     The stochastic block model (SBM) and degree-corrected block model (DCBM) are network models often selected as the fundamental setting in which to analyze the theoretical properties of community detection methods. We consider the problem of spectral clustering of SBM and DCBM networks under a local form of edge differential privacy. Using a randomized response privacy mechanism called the edge-flip mechanism, we develop theoretical guarantees for differentially private community detection, demonstrating conditions under which this strong privacy guarantee can be upheld while achieving spectral clustering convergence rates that match the known rates without privacy. We prove the strongest theoretical results are achievable for dense networks (those with node degree linear in the number of nodes), while weak consistency is achievable under mild sparsity (node degree greater than $\sqrt{n}$).  We empirically demonstrate our results on a number of network examples.
\end{abstract}

\section{Introduction}
\label{introduction}

In the field of network data analysis, a common problem is community detection, the algorithmic discovery of clusters or communities of dense
connection. There exist numerous parametric network models that exhibit community structure. Two fundamental models used in the analysis of community detection algorithms are the stochastic block model (SBM), first introduced in \citet{holland1983stochastic}, and the degree-corrected block model (DCBM) of \citet{karrer2011stochastic}, a popular SBM extension that allows for networks with more realistic heterogeneity in node degree.

We consider the problem of estimating latent community structure in networks using SBM and DCBM via the method of spectral clustering. In the absence of privacy, this problem has gained popularity, as spectral clustering has been shown to be a computationally tractable method for group estimation with satisfying theoretical guarantees \citep{mcsherry2001spectral,ng2002spectral,rohe2011spectral,qin2013regularized, lei2015consistency,joseph2016impact,binkiewicz2017covariate,abbe_community_2018,abbe2021ellp}. In this setting, the clusters of a given network are informed by the relationships contained within that network. These relationships are represented by the edges of the network, which may reflect friendships, partnerships, collaborations, communications, financial transactions, similarities, or other interactions between some set of entities, say, people or businesses. In many cases, these relationships may constitute sensitive or confidential information, and so we may desire a clustering that reveals high-level structures in the network while preserving the privacy of low-level relationships.

To provide such a privacy guarantee, we turn to a form of local differential privacy (DP) for networks (see, e.g.,  \cite{imola2021locally} for networks, and \cite{duchi2013local} for more general ideas on local DP). We assume that the nodes of the network are known but that their relationships are sensitive. We protect these relationships through a randomized-response mechanism applied to the edges of the network. Previous works, such as  \cite{mulle2015privacy} and \cite{karwa_sharing_2017}, have used similar techniques to obtain a synthetic network that satisfies $\eps$-DP \citep[of][]{dwork2006calibrating} with respect to the edges of the network. Recent works \citep{qin_generating_2017,imola2021locally} have emphasized that these synthetic networks can be constructed in a distributed fashion, requiring that no single party has knowledge of the full set of true network relationships. We construct our synthetic networks in such a manner, then apply a modified spectral clustering algorithm to obtain consistent estimates of group membership for SBM and DCBM networks.

The problem of performing DP community detection has previously been given empirical consideration  \citep{mulle2015privacy,nguyen2016detecting,qin_generating_2017}, but to our knowledge, our analysis of the cost of privacy in this setting is the first theoretical treatment of the subject. Our work generalizes the non-private results of \citet{lei2015consistency} to the DP setting, and we demonstrate certain conditions under which our DP estimator matches the known non-private convergence rates. We attribute these results to certain desirable properties of the edge-flip mechanism that we explore in some detail. While this method performs well for dense networks, the magnitude of error introduced by the local DP mechanism results in a slowing of the convergence rate for sparse networks and a requirement that node degree grows faster than $\sqrt{n}$. We derive these results in the context of more general finite-sample and asymptotic bounds on misclassification rates with privacy.

The paper is structured as follows: We first introduce the network models and notation in Section~\ref{models-and-notation}. In Section~\ref{privacy-mechanism}, we review differential privacy and define the edge-flip mechanism, and we demonstrate that the edge-flip mechanism can be viewed as a mixture distribution (Lemma~\ref{thm:mixture}), leading to two corollaries: a closure property for SBM and a more general post-processing step that allows us to preserve the expected spectral embedddings of edge-flipped networks. In Section~\ref{clustering-methods}, we propose a modified spectral clustering algorithm based on this method---with concentration bounds for the estimation of spectral embeddings of SBM and DCBM networks proven in Lemma~\ref{thm:frobenius}. Our main theoretical results on finite-sample bounds on misclassification and asymptotic convergence rates for private spectral clustering are presented in Section~\ref{theoretical-results}. Proofs of these results are deferred to Section~\ref{proofs}. In Section~\ref{empirical-evaluation}, we evaluate the performance of private spectral clustering on both simulated and observed networks. Finally, we conclude with a discussion of limitations and open questions in Section~\ref{discussion}.

\section{Network Models and Notation}
\label{models-and-notation}

SBM and DCBM are models for binary, undirected networks
without covariates. Edges in both networks occur as independent Bernoulli
random variables. Each of the $n$ nodes is assigned to one of $k$ communities, or \textit{blocks}, and these memberships are denoted in the parameter $\theta \in [k]^n$ (where $[k] = \{1, \dots, k\}$). In SBM, the probability of a given edge occurring between nodes $i$ and $j$ depends only on the blocks to which $i$ and $j$ belong, with those block-to-block edge probabilities recorded in the symmetric matrix $B \in (0, 1]^{k \times k}$. In DCBM, edge probabilities further depend on a parameter $\psi \in (0, 1]^n$ that determines the relative expected node degree for each node. If we let $Y$ be the symmetric $n \times n$ adjacency matrix from a DCBM, the elements of $Y$ are then distributed:
$$
\begin{aligned}
Y_{ij} \overset{\text{ind}}{\sim} \begin{cases}
    \mathrm{Bernoulli}(\psi_i \psi_j B_{\theta_i \theta_j}), & i < j \\
    0, & i = j \\
    Y_{ji}, & i > j \\
\end{cases}
\end{aligned}
$$

For such a network, we will write $Y \sim \DCBM(\theta, \psi, B)$. The stochastic block model can be regarded as a special case of the DCBM, where $\psi = \mathbf 1_n$, a vector of $n$ ones, i.e.,
\begin{equation}
\label{eq:sbm-dcbm-relation}
\SBM(\theta, B) \overset D = \DCBM(\theta, \mathbf 1_n, B) ,
\end{equation}
where $\overset D =$ indicates equality in joint distribution of network edges.

We let $C_j = \{ i : \theta_i = j \}$ denote the set of nodes in the $j$-th community. We denote the size of the $j$-th community in an SBM or DCBM (i.e., the number of nodes in a block) as $n_j = |C_j|$, the smallest of which we denote $\nmin = \min_{j \in [k]} n_j$, the largest as $\nmax = \max_{j \in [k]} n_j$, and the second-largest as $\nmax'$. As in \citet{lei2015consistency}, we denote the effective size of the $j$-th block in a DCBM as $\tilde n_j = \sum_{i \in C_j} \psi_i^2$ and the largest effective size of a block as $\tilde n_\text{max} = \max_{j \in [k]} \tilde n_j$. As a measure of heterogeneity within the $j$-th block, we use $\nu_j = n_j^{-2}(\sum_{i \in C_j} \psi_i^{-2})(\sum_{i \in C_j} \psi_i^{2})$.

We use $\lambda_B$ to denote the smallest absolute nonzero eigenvalue of the matrix $B$. The largest entry in $B$ is denoted $\max B = \max_{ij} B_{ij}$. We represent networks via adjacency matrices, e.g., $Y \in \{0, 1\}^{n \times n}$. The $i$-th row of the matrix $Y$ is denoted $Y_{i*}$. We use $\| x \|_p$ to denote the $\ell_p$ norm of a vector $x$, $\|A\|_F$ to denote the Frobenius norm of a matrix, and $\| A \|$ to denote the operator norm of the matrix $A$, i.e., $\| A \| = \sup_{\|x\|_2 = 1} \|Ax\|_2$.

In asymptotic notation, we write $a_n = o(b_n)$ when $| a_n / b_n | \to 0$ as $n \to \infty$; $a_n = \omega(b_n)$ when $| a_n / b_n | \to \infty$ as $n \to \infty$; $a_n = O(b_n)$ when $| a_n / b_n | \leq C$ for some $C > 0$ and all $n$; $a_n = \Omega(b_n)$ when $| a_n / b_n | \geq C$ for some $C > 0$ and all $n$; and $a_n = \Theta(b_n)$ when $a_n = O(b_n)$ and $a_n = \Omega(b_n)$. Finally, we write $X_n = O_P(b_n)$ if for any $\alpha > 0$ there exists a constant $M$ such that $P(|X_n / b_n| > M) < \alpha$ for all large $n$.

\section{Privacy Mechanism as a Mixture Distribution}
\label{privacy-mechanism}

\subsection{Defining Privacy}

The framework of differential privacy offers more than just a formal privacy definition: it offers \textit{many} privacy definitions that build on each other. To fully appreciate the privacy implications of a given definition---and thereby to justify our specific choice of definition---we briefly review the core concepts of DP and discuss the nuanced distinctions between privacy definitions that could be applied to the problem at hand. Consider first the typical definition of DP:

\begin{definition}[Differential Privacy \citep{dwork2006calibrating}]
Let $\eps > 0, \delta \in [0, 1)$. Let $\Y, \Z$ be sets, and let $d : \Y \times \Y \to \mathbb Z_{\geq 0}$ be an integer-valued metric. Let $\M : \Y \to \Z$ be a randomized algorithm. We say $\M$ satisfies \textbf{$(\eps, \delta)$--differential privacy} with respect to $d$ if for any $y, y' \in \Y$ satisfying $d(y, y') = 1$ and any $E \subseteq \Z$, we have:
\begin{equation}
\label{eq:general-dp}
P(\M(y) \in E) \leq e^\eps \cdot P(\M(y') \in E) + \delta
\end{equation}
If $\delta = 0$, we say $\M$ satisfies \textbf{$\eps$--differential privacy}.
\end{definition}

Such an algorithm $\M$ is called a \textit{privacy mechanism}. The elements $y$ and $y'$ are often referred to as \textit{databases} and thought of as a collection of records from some set, say, $\Y = \R^n$ or $\Y = \{0, 1\}^n$. The metric $d$ is usually chosen to be the Hamming distance, such that (\ref{eq:general-dp}) can be interpreted thus: changing any one record in an arbitrary database $y \in \Y$ does not significantly affect the distribution of $\M(y)$.\footnote{Note that the databases $y, y'$ are not presented as random quantities in this definition. One can think of $\M(y)$ as a distribution conditioned on the observed data, as in \citet{wasserman_statistical_2010}.} Consequently, one can gain virtually no information about a single record based on the private output $\M(y)$. The parameters $\eps$ and $\delta$ quantify the strength of the privacy guarantee, with smaller values conferring a stronger guarantee. The case when $\delta > 0$ is often referred to as \textit{approximate differential privacy}, while the case where $\delta = 0$ is called \textit{pure differential privacy}. The parameter $\eps$ is commonly called the \textit{privacy-loss budget}.

In the network setting, there are two primary choices for the metric $d$ that are frequently employed, and these correspond to the notions of edge DP and node DP \citep{sahai_analyzing_2013,qin_generating_2017}:

\begin{definition}[Edge DP]
Let $Y, Y'$ be two networks with $n$ nodes. Then $\dedge(Y, Y')$ is given by the total number of edges that differ between $Y$ and $Y'$. If $\M$ satisfies $(\eps, \delta)$--DP with respect to $\dedge$, then we say $\M$ satisfies \textbf{$(\eps, \delta)$--edge differential privacy}.
\end{definition}

\begin{definition}[Node DP]
Let $Y, Y'$ be two networks with $n$ nodes. Then $\dnode(Y, Y')$ is given by the minimum number of nodes in $Y$ whose incident edges could be modified in order to obtain $Y'$. If $\M$ satisfies $(\eps, \delta)$--DP with respect to $\dnode$, then we say $\M$ satisfies \textbf{$(\eps, \delta)$--node differential privacy}.
\end{definition}

Since $\dedge(Y, Y') \geq \dnode(Y, Y')$, the definition of node DP is more strict than the definition of edge DP. Indeed, node DP implies edge DP, but the reverse is not true. In many cases, the strictness of node DP precludes meaningful analysis \citep{sahai_analyzing_2013}. This is the case, for example, in the problem we wish to solve. Suppose $\hat \theta(Y)$ is an $\eps$--DP estimator of the group membership $\theta$ for the network $Y$ with respect to a distance measure $d$. This implies that when we look at any given node's estimated label, $[\hat \theta(\cdot)]_i$, across two networks $Y, Y'$ satisfying $d(Y, Y') = 1$, we must satisfy:
$$
\frac{P( [\hat \theta(Y)]_i = \ell)}{P( [\hat \theta(Y')]_i = \ell)} \in [e^{-\eps}, e^\eps] \quad \forall \ell
$$
For edge DP ($d = \dedge$), this means that changing any single edge in the network cannot significantly affect the distribution of the estimated label for a given node. For node DP ($d = \dnode$), however, we can take any given node and change any subset (or even all) of its incident edges without significantly affecting the distribution of its estimated label. Since our goal is to infer labels from precisely these edges, this notion of privacy is too strict for our purposes. For this reason, we will focus on edge-based definitions of DP, which aim to protect the privacy of individual relationships within the network.\footnote{Edge-based definitions of DP implicitly assume that the only sensitive information in the network is encoded in these relationships. For example, in a community detection setting, we assume that the identities of nodes are not sensitive but that their relationships are.}

So far, the DP definitions we have considered all fall under the umbrella of \textit{central DP}. In central DP, we assume that one party, often called the \textit{trusted curator}, has complete knowledge of the true database $y$. This arrangement is not always desirable, which motivates the concept of \textit{local DP} \citep{duchi2013local}. In local DP, records in the database are held by a number of distributed parties (e.g., users of a social network), and only these parties require true knowledge of their records. To facilitate some central analysis of the database, a randomized algorithm is independently applied to each record to produce a \textit{local differentially private view} of that record. These local DP views may then be shared with a central processor for further analysis. Thus, in contrast with central DP, where a single mechanism is applied at the database level by a single database owner, local DP applies privacy mechanisms at the record level, eliminating the need for a trusted curator, while still allowing for centralized analysis of the data.

Applying local DP to the edge setting, we have further choices yet. Likely the most widely known definition of local DP in the edge setting is \textit{edge local DP} of \citet{qin_generating_2017}, but we will instead focus on \textit{relationship DP} as defined in \citet{imola2021locally}. This definition is more tailored to the setting of undirected networks, where it provides a stronger privacy guarantee.\footnote{More precisely, in an undirected network, $\eps$--edge local DP implies $2\eps$--relationship DP \citep{imola2021locally}.} In relationship DP (as in edge local DP), each node reports a private view (or summary) of its \textit{neighbor list}, the set of nodes with which it shares an edge. The formal definition is given below.

\begin{definition}[Relationship DP \citep{imola2021locally}]
Let $\eps > 0$, and let $\M_1, \dots, \M_n$ be randomized algorithms with domain $\{0, 1\}^n$. The algorithm $\M(Y) = (\M_1(Y_{1*}), \dots, \M_n(Y_{n*}))$ is said to satisfy \textbf{$\eps$--relationship DP} if for each $E \subseteq \mathrm{Range}(\M)$ and networks $Y, Y'$ satisfying $\dedge(Y, Y') = 1$, we have:
$$
P(\mathcal M (Y) \in E) \leq e^\varepsilon \cdot P(\mathcal M (Y') \in E)
$$
\end{definition}

\subsection{The Edge-Flip Mechanism}

The specific privacy mechanism we employ is a simple randomized-response mechanism that produces an $\eps$-relationship-DP synthetic copy $\M_\eps(Y)$ of the original network $Y$ by randomly flipping edges in $Y$. This can be performed locally by assigning each node to flip and self-report a subset of its edges. In this way, no single party ever needs full knowledge of the true adjacency matrix $Y$. Since differential privacy is closed under post-processing \citep{dwork2006calibrating}, any analysis performed on the synthetic network $\M_\eps(Y)$ preserves $\eps$--relationship DP. Edge-flipping mechanisms have been utilized in several earlier papers and interpreted under various edge DP definitions, including (central) edge DP \citep{mulle2015privacy,karwa_inference_2016, karwa_sharing_2017}, edge local DP \citep{qin_generating_2017}, and relationship DP \citep{imola2021locally}. We include here an explicit adaptation of the edge-flipping procedure from \citet{imola2021locally}, which we term the \textit{symmetric edge-flip mechanism}.

\begin{definition}[Symmetric Edge-Flip Mechanism]
Let $\eps > 0$. For $i \in [n]$, let $\M_i : \{0, 1\}^n \to \{0, 1\}^n$ such that:
$$
[\M_i(x)]_j \overset{\text{ind}}{=} \begin{cases}
  0 & i \geq j \\
  1 - x_j & i < j \quad \text{w.p.} \quad \frac{1}{1 + e^{\varepsilon}} \\
  x_j & i < j \quad \text{w.p.} \quad \frac{e^{\varepsilon}}{1 + e^{\varepsilon}}
\end{cases},
$$
and let
$$
T(Y) = \begin{bmatrix}
    \M_1(Y_{1*}) \\
    \vdots \\
    \M_n(Y_{n*})
\end{bmatrix}.
$$
Then the \textbf{$n \times n$ symmetric edge-flip mechanism} is the mechanism $\mathcal M_\varepsilon(Y) = T(Y) + [T(Y)]^T$.
\end{definition}

\begin{theorem}
\label{thm:privacy}
The symmetric edge-flip mechanism $\mathcal M_\varepsilon$ satisfies $\eps$--relationship DP.
\end{theorem}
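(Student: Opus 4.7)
The plan is to reduce the relationship-DP guarantee for $\mathcal M_\eps$ to a per-coordinate calculation on the upper-triangular matrix $T(Y)$, then invoke post-processing. The key observation is that because $T(Y)$ is strictly upper triangular by construction, the entire random object $\mathcal M_\eps(Y) = T(Y) + [T(Y)]^T$ is a deterministic, injective function of $T(Y)$ (we can always recover $T(Y)$ as the upper-triangular part of the symmetric output). Thus it suffices to verify that the map $T$ itself satisfies $\eps$--relationship DP, after which the closure of DP under post-processing transfers the guarantee to $\mathcal M_\eps$.

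To verify $\eps$-DP for $T$, I would fix any two networks $Y, Y'$ with $\dedge(Y, Y') = 1$, say differing on the undirected edge $\{a,b\}$ with $a < b$. Since $T$ writes only into entries $(i,j)$ with $i < j$, the distributions of $T(Y)$ and $T(Y')$ coincide in every coordinate except $(a,b)$, where they correspond to independent Bernoulli$(p)$ and Bernoulli$(1-p)$ random variables with $p = e^\eps/(1+e^\eps)$ (and choice of sign depending on whether $Y_{ab}=1$ or $0$). For any realization $Z$ of $T(Y)$ the likelihood ratio factorizes:
$$
\frac{P(T(Y)=Z)}{P(T(Y')=Z)} = \frac{P([T(Y)]_{ab} = Z_{ab})}{P([T(Y')]_{ab} = Z_{ab})} \in \{e^\eps, e^{-\eps}\}.
$$
Summing over $Z \in E$ gives $P(T(Y) \in E) \leq e^\eps P(T(Y') \in E)$ for every measurable event $E$, so $T$ is $\eps$--DP with respect to $\dedge$. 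Since $T$ has the form $T(Y) = (\M_1(Y_{1*}), \dots, \M_n(Y_{n*}))$ with each $\M_i$ acting independently on $Y_{i*}$, this is precisely $\eps$--relationship DP in the sense of the definition above.

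Finally, applying post-processing closure to the deterministic symmetrization $A \mapsto A + A^T$ yields the stated guarantee for $\mathcal M_\eps$. The main subtlety, and the only step that needs care, is justifying that symmetrization is genuinely just post-processing of $T(Y)$ rather than introducing new dependence on $Y$: this is immediate because $[T(Y)]^T$ is computed from $T(Y)$ alone, with no fresh access to $Y$, so no additional privacy is expended. The per-coordinate likelihood-ratio calculation and the independence of the flips across coordinates $(i,j)$ with $i<j$ do the rest.
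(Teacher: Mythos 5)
Your proof is correct and follows essentially the same route as the paper's: both reduce to the observation that only the single upper-triangular coordinate corresponding to the changed edge has a different distribution, bound the per-coordinate likelihood ratio by $e^\eps$, and treat the symmetrization $T(Y) \mapsto T(Y) + [T(Y)]^T$ as post-processing. The only cosmetic difference is that the paper isolates the one affected row $\M_i(Y_{i*})$ and argues all other rows are identically distributed, whereas you factorize over all coordinates of $T(Y)$ at once; these are the same argument.
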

\begin{proof}
The proof of this follows from Theorem~3 of \citet{imola2021locally} and the corresponding discussion. For completeness, we give a formal proof in Section~\ref{privacy-proof}.
\end{proof}

The simplicity of the edge-flip mechanism affords it several key advantages: In practice, it is easy and flexible to implement. In theory, the distribution of the resulting synthetic network is transparent and tractable. In fact, in Lemma~\ref{thm:mixture} we show that the network generated by the edge-flip mechanism is a mixture of the original non-private network with an \erdosrenyi{} network. This in turn leads to closure and statistical inference properties that are important and useful, both theoretically and practically. These properties are demonstrated in the corollaries that follow. 

\begin{lemma}
\label{thm:mixture}
Let $Y \in \{0, 1 \}^{n \times n}$ be a random, undirected, binary network. Let $Z \sim G(n, \frac 12)$ be an \erdosrenyi{} random graph with $n$ nodes and edge probability $\frac 12$, and let $U_{ij} \overset{\text{ind}}{\sim} \mathrm{Bernoulli}\left( \frac{2}{e^\eps + 1} \right)$ for $1 \leq i < j \leq n$. Define $\M'(Y)$ to be the symmetric mixture network such that for $i < j$:
$$
\M'(Y)_{ij} \, | \, Y, Z, U = (Z_{ij})^{U_{ij}} (Y_{ij})^{(1 - U_{ij})}
$$
Then $\M'(Y)$ is equal in distribution to $\Meps(Y)$.
\end{lemma}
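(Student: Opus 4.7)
The plan is to verify that $\M'(Y)$ and $\Meps(Y)$ share the same conditional joint distribution given $Y$, from which equality in (unconditional) distribution follows. To do so, I will argue that both are symmetric matrices with zero diagonal whose strict upper-triangular entries are conditionally independent given $Y$, and that each such entry is conditionally Bernoulli with the same success probability.

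First, both constructions produce symmetric adjacency matrices with zero on the diagonal: for $\Meps(Y) = T(Y) + [T(Y)]^T$ this holds because $[T(Y)]_{ij}$ is supported only on $i < j$; for $\M'(Y)$, the definition is given only for $i < j$ and extended by symmetry. So it suffices to compare the joint conditional laws of the strict upper-triangular entries given $Y$. Second, I would verify conditional independence of these entries. For $\Meps$, the coin flips used inside the row-mechanisms $\M_1, \dots, \M_n$ are drawn independently across all coordinates $(i,j)$ with $i < j$. For $\M'$, the random variables $\{(Z_{ij}, U_{ij}) : i < j\}$ are mutually independent by construction, so the functions $\M'(Y)_{ij} = Z_{ij}^{U_{ij}} Y_{ij}^{1 - U_{ij}}$ are conditionally independent given $Y$.

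The only remaining step is to match the entrywise conditional distributions. By definition, for $i < j$ the mechanism $\Meps$ sets $[\Meps(Y)]_{ij} = 1 - Y_{ij}$ with probability $1/(1 + e^\eps)$ and $[\Meps(Y)]_{ij} = Y_{ij}$ otherwise. For $\M'$, conditional on $Y$, we have $\M'(Y)_{ij} = Y_{ij}$ whenever $U_{ij} = 0$, and $\M'(Y)_{ij} = Z_{ij}$ whenever $U_{ij} = 1$. Since $Z_{ij}$ is independent of $Y$ and Bernoulli$(1/2)$, a short calculation gives
$$
P\bigl(\M'(Y)_{ij} \neq Y_{ij} \,\big|\, Y\bigr) = P(U_{ij} = 1) \cdot P(Z_{ij} \neq Y_{ij} \mid Y) = \frac{2}{e^\eps + 1} \cdot \frac{1}{2} = \frac{1}{1 + e^\eps},
$$
which matches the flip probability of $\Meps$. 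Combined with the independence and symmetry observations above, this shows the joint conditional distributions agree, and hence the two mechanisms are equal in distribution.

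There is no substantial obstacle; the crux is simply the probability identity $\tfrac{2}{e^\eps+1} \cdot \tfrac{1}{2} = \tfrac{1}{e^\eps+1}$, i.e., that mixing $Y$ with a Bernoulli$(1/2)$ graph at rate $2/(e^\eps+1)$ is equivalent to independently flipping each edge of $Y$ at rate $1/(e^\eps+1)$. The care needed is mostly bookkeeping: tracking the support on $i < j$ and invoking the symmetric extension so that the joint laws, not just the marginals, are shown to coincide.
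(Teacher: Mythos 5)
Your proposal is correct and follows essentially the same route as the paper: condition on $Y$, note that the strict upper-triangular entries of both mechanisms are conditionally independent Bernoulli variables, and match the entrywise conditional laws (you match the flip probability $\tfrac{2}{e^\eps+1}\cdot\tfrac12 = \tfrac{1}{e^\eps+1}$, while the paper equivalently computes $P(\cdot = 1 \mid Y)$ directly). Your additional bookkeeping on symmetry and joint (not just marginal) agreement is sound and slightly more explicit than the paper's write-up.
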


\begin{proof}
It is sufficient to show that the conditional distributions of $\M'(Y) \, | \, Y$ and $\Meps(Y) \, | \, Y$ are equivalent. Conditioned on $Y$, the entries of $\M'(Y)$ and $\Meps(Y)$ are independent, binary random variables, and:
$$
\begin{aligned}
P(\M'(Y)_{ij} = 1 \, | \, Y) &= E [\M'(Y)_{ij} \, | \, Y] \\
  &= E [\; E [ \M'(Y)_{ij} | \, Y, Z, U] \; | \, Y] \\
  &= E \left[ U_{ij} Z_{ij} + (1 - U_{ij}) Y_{ij} \, \middle| \, Y \right] \\
  &= \frac {1}{e^\eps + 1} + \frac {e^\eps - 1}{e^\eps + 1} Y_{ij} \\
  &= \frac {1}{e^\eps + 1} (1 - Y_{ij}) + \frac {e^\eps}{e^\eps + 1} Y_{ij} \\
  &= E [ \M_\eps(Y)_{ij} \, | \, Y] \\
  &= P(\M_\eps(Y)_{ij} = 1 \, | \, Y)
\end{aligned}
$$
\end{proof}

Lemma~\ref{thm:mixture} provides useful intuition about the edge-flip mechanism. As $\eps \to \infty$, our synthetic network $\Meps(Y)$ approaches the true network $Y$, and as $\eps \to 0$, we approach an \erdosrenyi{} network. For block models, this is a welcome property, as it means the community structure of the network is exactly preserved. For the SBM in particular, we have closure of the model family under $\M_\eps$. The following corollary follows from a simple extension of the proof of Lemma~\ref{thm:mixture}.

\begin{corollary}
\label{thm:closure}
If $Y \sim \SBM(\theta, B)$, then $\M_\eps(Y) \sim \SBM(\theta, \tau_\eps(B))$, where:
\begin{equation}
\label{eq:tau-eps}
\tau_\eps(B) = \frac 1{e^\eps + 1} \mathbf 1_k \mathbf 1_k^T + \frac {e^\eps - 1}{e^\eps + 1} B.
\end{equation}
\end{corollary}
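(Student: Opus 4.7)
The plan is to apply Lemma~\ref{thm:mixture} directly, using the mixture representation $\M'(Y)$ as a stand-in for $\M_\eps(Y)$, and then verify that the resulting entries are independent Bernoulli variables with the claimed SBM probabilities.

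First I would observe that since $Y \sim \SBM(\theta, B)$, the upper-triangular entries $Y_{ij}$ ($i<j$) are mutually independent with $Y_{ij} \sim \mathrm{Bernoulli}(B_{\theta_i \theta_j})$. By Lemma~\ref{thm:mixture}, $\M_\eps(Y) \overset{D}{=} \M'(Y)$, whose upper-triangular entries are given by $\M'(Y)_{ij} = (Z_{ij})^{U_{ij}} (Y_{ij})^{1 - U_{ij}}$ with $Z_{ij} \sim \mathrm{Bernoulli}(1/2)$, $U_{ij} \sim \mathrm{Bernoulli}(2/(e^\eps+1))$, all mutually independent across $i<j$ and independent of $Y$. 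Because each $\M'(Y)_{ij}$ is a deterministic function of the independent triples $(Y_{ij}, Z_{ij}, U_{ij})$, the collection $\{\M'(Y)_{ij}\}_{i<j}$ is independent, which matches the independence structure required by the SBM.

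Next I would compute the marginal edge probabilities. Conditioning on $Y_{ij}$ and unwinding the same calculation already performed in the proof of Lemma~\ref{thm:mixture}, one gets
$$
P(\M'(Y)_{ij} = 1 \mid Y_{ij}) = \frac{1}{e^\eps+1} + \frac{e^\eps-1}{e^\eps+1} Y_{ij}.
$$
Taking expectations and using $E[Y_{ij}] = B_{\theta_i \theta_j}$ yields
$$
P(\M'(Y)_{ij} = 1) = \frac{1}{e^\eps+1} + \frac{e^\eps-1}{e^\eps+1} B_{\theta_i \theta_j} = [\tau_\eps(B)]_{\theta_i \theta_j},
$$
which is exactly the $(\theta_i,\theta_j)$-entry of the block probability matrix defined in equation~(\ref{eq:tau-eps}). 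Symmetry of $\tau_\eps(B)$ follows from symmetry of $B$ and of $\mathbf{1}_k \mathbf{1}_k^T$, and the diagonal and lower-triangular entries of $\M_\eps(Y)$ obey the SBM conventions by construction of the symmetric edge-flip mechanism.

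There is no substantive obstacle here: the corollary is essentially a bookkeeping consequence of Lemma~\ref{thm:mixture}. The only subtle point to flag is that one must check independence of the entries of $\M_\eps(Y)$ survives the mixing, but this is immediate since the mixing variables $(Z_{ij}, U_{ij})$ are themselves independent across pairs and independent of $Y$, so conditional independence of entries given $Y$ plus independence of $Y$'s entries gives unconditional independence.
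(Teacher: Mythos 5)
Your proof is correct and follows essentially the same route the paper intends: the paper gives no separate proof but states that the corollary "follows from a simple extension of the proof of Lemma~\ref{thm:mixture}," which is exactly the extension you carry out (marginalizing the conditional edge probability over $Y_{ij} \sim \mathrm{Bernoulli}(B_{\theta_i\theta_j})$ and checking independence across pairs). No gaps.
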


Closure of a particular family of models under edge flipping can be used to obtain theoretical guarantees for differentially private procedures via direct application of non-private theory. SBM is not alone in holding this convenient property, with some extensions, such as the mixed-membership SBM of \citet{airoldi2008mixed}, sharing a similar closure property. Unfortunately, not all SBM extensions afford such a closure property. In particular, DCBM is not closed under edge-flipping. For this reason, we will explore a more generally applicable framework here, relying on a weaker property of the edge-flip mechanism that holds for any random binary network: via a small ``downshift" transformation to an edge-flipped network, we can recover a network whose expectation matches that of the original network, up to a scaling factor.

\begin{corollary}
\label{thm:scaledexpectation}
Let $Y \in \{0, 1 \}^{n \times n}$ be a random, undirected, binary network, and let $(\depsMeps)(Y) = \Meps(Y) - (e^\eps + 1)^{-1}(\mathbf 1_n \mathbf 1_n^T - \mathbf I_n)$ be the downshifted edge-flipped network. Then:
$$
E (\depsMeps)(Y) = \frac{e^\eps - 1}{e^\eps + 1} EY.
$$
\end{corollary}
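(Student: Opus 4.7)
The plan is to reduce the corollary to a direct entrywise computation using the conditional mean formula that is already embedded in the proof of Lemma~\ref{thm:mixture}. That proof shows that for $i<j$,
$$
E[\Meps(Y)_{ij} \mid Y] \;=\; \frac{1}{e^\eps+1}(1-Y_{ij}) + \frac{e^\eps}{e^\eps+1} Y_{ij} \;=\; \frac{1}{e^\eps+1} + \frac{e^\eps-1}{e^\eps+1} Y_{ij}.
$$
Taking expectations over $Y$ by the tower property, and using symmetry of $\Meps(Y)$ to extend to $i>j$, yields the unconditional entrywise mean $E[\Meps(Y)_{ij}] = \tfrac{1}{e^\eps+1} + \tfrac{e^\eps-1}{e^\eps+1} EY_{ij}$ for every off-diagonal index. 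On the diagonal, $\Meps(Y)_{ii}$ is identically zero by construction.

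Next I would simply subtract the downshift matrix. The key observation is that $(e^\eps+1)^{-1}(\mathbf{1}_n\mathbf{1}_n^T - \mathbf{I}_n)$ is a deterministic matrix equal to $(e^\eps+1)^{-1}$ off the diagonal and $0$ on the diagonal, so it exactly cancels the additive constant $\tfrac{1}{e^\eps+1}$ appearing off-diagonally in the entrywise mean above. Thus for $i\neq j$,
$$
E[(\depsMeps)(Y)_{ij}] \;=\; \tfrac{1}{e^\eps+1} + \tfrac{e^\eps-1}{e^\eps+1} EY_{ij} - \tfrac{1}{e^\eps+1} \;=\; \tfrac{e^\eps-1}{e^\eps+1} EY_{ij},
$$
and for $i=j$, both sides vanish since $Y$ and $\Meps(Y)$ have zero diagonal and the downshift is zero there as well. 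Assembling these entrywise identities into a single matrix equation completes the proof.

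Honestly, there is no real obstacle here — the work was done inside Lemma~\ref{thm:mixture}, and the present statement is essentially an unconditioning plus a bookkeeping rearrangement. The only point that warrants care is the diagonal: the $-\mathbf{I}_n$ term in the definition of $\deps$ is precisely what ensures the matrix identity holds on the diagonal as well as off, and without it the claimed equality of matrices would fail on $n$ diagonal entries. I would therefore explicitly treat the two cases $i\neq j$ and $i=j$ to make the role of the $-\mathbf{I}_n$ correction transparent.
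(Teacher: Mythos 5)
Your proposal is correct and follows essentially the same route as the paper: both rely on the conditional-mean identity $E[\Meps(Y)_{ij}\mid Y]=\tfrac{1}{e^\eps+1}+\tfrac{e^\eps-1}{e^\eps+1}Y_{ij}$ established inside Lemma~\ref{thm:mixture}, apply the tower property, and observe that the downshift matrix cancels the off-diagonal constant. Your explicit treatment of the diagonal and the role of the $-\mathbf I_n$ term is a harmless (and slightly more careful) elaboration of what the paper states in one matrix-level line.
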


\begin{proof}
This follows from a slight rewriting of the derivation in the proof of Lemma~\ref{thm:mixture}, namely:
$$
\begin{aligned}
E[\Meps(Y)] &= E[ \; E[ \M_\eps(Y) \, | \, Y ] \; ] \\
    &= \frac {1}{e^\eps + 1} (\mathbf 1_n \mathbf 1_n^T - \mathbf I_n) + \frac{e^\eps - 1}{e^\eps + 1} EY
\end{aligned}
$$
\end{proof}

Lemma~\ref{thm:scaledexpectation} has important implications: for any random, undirected, binary network $Y$, the matrices $E[Y]$ and $E[(\depsMeps)(Y)]$ share the same eigenvectors. This fact leads to a more general post-processing step under DP that allows us to preserve the expected spectral embeddings of edge-flipped networks.

\section{Modified Clustering Methods with Concentration Bounds}
\label{clustering-methods}

Our goal is to use spectral clustering to estimate the unobserved block membership parameter $\theta$ while satisfying $\eps$-relationship DP. A variety of related clustering algorithms exist under the umbrella term of \textit{spectral clustering} \citep{ng2002spectral,von2007tutorial,rohe2011spectral,lei2015consistency,binkiewicz2017covariate}. In our setting, we focus on the method of adjacency spectral clustering with $k$ known, following the framework of \citet{lei2015consistency}. In adjacency spectral clustering, we place the leading $k$ eigenvectors (by absolute value of corresponding eigenvalues) of the observed adjacency matrix $Y$ into an $n \times k$ matrix $\hat U$. We consider each row of $\hat U$ to be the spectral embedding of the respective node in the network, and we perform a clustering process over these embeddings: for SBM, we perform simple $k$-means, and for DCBM, we normalize the embeddings to have unit norm before performing $k$-medians clustering. The resulting estimated cluster memberships serve as our estimator, $\hat \theta$.

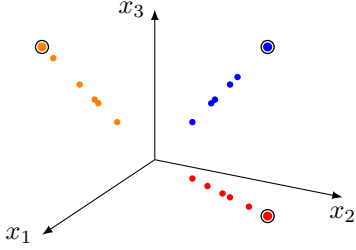
\begin{figure}
    \centering
    \begin{tikzpicture}[scale=.5]
    \draw[latex-] (0, 4) node[left]{$x_3$}
            -- (0, 0);
    \draw[latex-] (-3, -2) node[left]{$x_1$}
            -- (0, 0);
    \draw[latex-] (5, -1) node[below]{$x_2$}
            -- (0, 0);
    \filldraw[color=black] (3, 3) circle[radius=5pt]
        (-3, 3) circle[radius=5pt]
        (3, -1.5) circle[radius=5pt];
    \filldraw[color=white] (3, 3) circle[radius=4pt]
        (-3, 3) circle[radius=4pt]
        (3, -1.5) circle[radius=4pt];
    \filldraw[color=blue] (1, 1) circle[radius=2pt]
        (1.5, 1.5) circle[radius=2pt]
        (1.6, 1.6) circle[radius=2pt]
        (2, 2) circle[radius=2pt]
        (2.2, 2.2) circle[radius=2pt]
        (3, 3) circle[radius=3pt];
    \filldraw[color=orange] (-1, 1) circle[radius=2pt]
        (-1.5, 1.5) circle[radius=2pt]
        (-1.6, 1.6) circle[radius=2pt]
        (-2, 2) circle[radius=2pt]
        (-2.7, 2.7) circle[radius=2pt]
        (-3, 3) circle[radius=3pt];
    \filldraw[color=red] (1, -0.5) circle[radius=2pt]
        (1.4, -0.7) circle[radius=2pt]
        (1.8, -0.9) circle[radius=2pt]
        (2, -1) circle[radius=2pt]
        (2.5, -1.25) circle[radius=2pt]
        (3, -1.5) circle[radius=3pt];
    \end{tikzpicture}
    
    \caption{Hypothetical expected embeddings $U$ for DCBM in $\mathbb R^3$, where points represent rows of $U$ and colors represent blocks. (Circled points represent structure of expected SBM embeddings.)}
\end{figure}

This approach is theoretically justified by demonstrating that the observed embeddings $\hat U$ concentrate around a set of ``expected" embeddings $U$ that satisfy appropriate geometric properties. In particular, let $Y \sim \DCBM(\theta, \psi, B)$, let $P$ be the $n \times n$ matrix with entries $P_{ij} = \psi_i \psi_j B_{\theta_i \theta_j}$, and let $U$ be the $n \times k$ matrix that holds the leading $k$ eigenvectors of $P$. (Note that $E[Y] = P$ except on the diagonal.) In the case of SBM, it can be shown that the rows of $U$ correspond to $k$ distinct points, and two nodes belong to the same block if and only if their expected embeddings are equal. For general DCBM, the rows of $U$ fall along $k$ rays emanating from the origin, with each ray corresponding to a unique block \citep{jin2015fast,lei2015consistency}.

Unfortunately, these geometric properties are not, in general, preserved by the edge-flipping routine: the embeddings of $\Meps(Y)$ do not have the same expected geometric properties as the embeddings of $Y$. However, our Corollary~\ref{thm:scaledexpectation} suggests a modified approach: Since $E[(\depsMeps)(Y)]$ and $E[Y]$ share the same eigenvectors, our intuition suggests that the embeddings of $(\depsMeps)(Y)$ and $Y$ will concentrate in the same locations. Indeed, we prove such concentration bounds in Lemma~\ref{thm:frobenius}. This is also the key fact that powers the main results of Section~\ref{theoretical-results}. To perform spectral clustering on the edge-flipped network, then, we need only to modify the algorithms to use the downshifted adjacency matrix $(\depsMeps)(Y)$ in place of $\Meps(Y)$. Our proposed modified algorithms for SBM and DCBM are given in Algorithms~\ref{alg:spectral-kmeans} and \ref{alg:spectral-normalized-kmedians}, respectively. The $k$-means and $k$-medians problems, as well as their approximations, are defined in Section~\ref{k-means-k-medians-problems}.

\begin{algorithm}
    \caption{EF Spectral Clustering ($k$-means)}
    \label{alg:spectral-kmeans}
\begin{algorithmic}
   \STATE {\bfseries Input:} edge-flipped adjacency matrix $A \in \{0, 1\}^{n \times n}$, number of blocks $k$, approximation error $\gamma$, privacy budget $\eps$
   \STATE {\bfseries Output:} private estimate of block membership $\hat \theta \in [k]^n$
   
   \STATE {}
   \STATE Let $A_\downarrow = \begin{cases} A & \eps = \infty \\ A - (e^\eps + 1)^{-1}(\mathbf 1_n \mathbf 1_n^T - \mathbf I_n) & \eps < \infty \end{cases}$.
   \STATE Let $\hat u_1, \dots, \hat u_k \in \R^n$ be the $k$ leading eigenvectors (by absolute value) of $A_\downarrow$.
   \STATE Let $\hat \theta$ be a $(1 + \gamma)$-approximate solution to the $k$-means problem over the rows of $\hat U_\downarrow = [ \hat u_1  \; \dots \; \hat u_k ] \in \R^{n \times k}$.
   \STATE {\bfseries return} $\hat \theta$
\end{algorithmic}
\end{algorithm}

\begin{algorithm}
    \caption{EF Spectral Clustering (Normalized $k$-medians)}
    \label{alg:spectral-normalized-kmedians}
\begin{algorithmic}
   \STATE {\bfseries Input:} edge-flipped adjacency matrix $A \in \{0, 1\}^{n \times n}$, number of blocks $k$, approximation error $\gamma$, privacy budget $\eps$
   \STATE {\bfseries Output:} private estimate of block membership $\hat \theta \in [k]^n$
   
   \STATE {}
   \STATE Let $A_\downarrow = \begin{cases} A & \eps = \infty \\ A - (e^\eps + 1)^{-1}(\mathbf 1_n \mathbf 1_n^T - \mathbf I_n) & \eps < \infty \end{cases}$.
   \STATE Let $\hat u_1, \dots, \hat u_k \in \R^n$ be the $k$ leading eigenvectors (by absolute value) of $A_\downarrow$.
   \STATE Let $\hat U_\downarrow = [ \hat u_1 \; \dots \; \hat u_k ] \in \R^{n \times k}$.
   \STATE {}
   \STATE \textit{\# construct row-normalized matrix} $\hat U_\downarrow'$ \textit{over non-zero rows}
   \STATE Let $I_+ = \{ i \in [n] : \| (\hat U_\downarrow)_{i*} \|_2 > 0 \}$, $f : [|I_+|] \to [n]$ s.t. $f(i) = (I_+)_i$
   \FOR {$i = 1, \dots, |I_+|$}
        \STATE Let $(\hat U_\downarrow')_{i*} = (\hat U_\downarrow)_{f(i)*} / \| (\hat U_\downarrow)_{f(i)*} \|_2$
    \ENDFOR
   \STATE {}
   \STATE Let $\hat \theta'$ be a $(1 + \gamma)$-approximate solution to the $k$-medians problem over the rows of $\hat U_\downarrow'$.
   \STATE Let $\hat \theta_i = \begin{cases}
        1 & i \not \in I_+ \\
        \hat \theta'_{f^{-1}(i)} & i \in I_+
   \end{cases}, \quad i \in [n]$
   \STATE {\bfseries return} $\hat \theta$
\end{algorithmic}
\end{algorithm}

To evaluate the performance of the clustering algorithms, we assume knowledge of ground truth group labels $\theta$ and measure two forms of misclassification. The first is an overall measure of misclassification that captures the proportion of misidentified nodes, up to a permutation of labelings. Let $S_{[k]}$ denote the set of all permutations $\sigma: [k] \to [k]$, and let $\ind{\cdot}$ denote an indicator function. Then the \emph{overall misclassification} for the estimated groups $\hat \theta$ is given by:
\begin{equation}
\label{eq:misclassification}
L(\theta, \hat \theta) = \min_{\sigma \in S_{[k]}} \frac 1n \sum_{i=1}^n \ind{\sigma(\hat \theta_i) \neq \theta_i)}.
\end{equation}

As in \citet{lei2015consistency}, we also consider the \emph{worst-case misclassification} within a single community in order to account for trivialities\footnote{Consider an asymptotic regime with $k=2$ where one community is of constant size $N_1$ and the other is of size $n - N_1$. In this setting, a trivial estimator such as $\hat \theta = \mathbf 1_n$ can achieve consistency in the sense that $L(\theta, \hat \theta) \to 0$.}. This error is given by:
\begin{equation}
    \label{eq:wc-misclassification}
\tilde L(\theta, \hat \theta) = \max_{j \in [k]} \min_{\sigma \in S_{[k]}} \frac 1{|C_j|} \sum_{i \in C_j} \ind{\sigma(\hat \theta_i) \neq j}.
\end{equation}

\section{Misclassification Bounds for Private Clustering}
\label{theoretical-results}

We generalize the results of \citet{lei2015consistency} developed for the non-private network setting to derive finite-sample misclassification bounds for spectral clustering of SBM and DCBM under local differential privacy using the modified spectral clustering algorithms proposed in Section~\ref{clustering-methods} (Algorithms~\ref{alg:spectral-kmeans} and \ref{alg:spectral-normalized-kmedians}). We argue in Section~\ref{clustering-methods} and prove in Section~\ref{proofs} that the spectral embeddings of $Y$ and $(\depsMeps)(Y)$ will concentrate in the same locations. In Lemma~\ref{thm:frobenius}, we also demonstrated that the concentration weakens with a stronger privacy guarantee (i.e., smaller $\eps$). In this section, we use these concentration results to derive misclassification bounds for private spectral clustering of SBM and DCBM; all proofs are provided in Section \ref{proofs}. We begin with the results for SBM.

\subsection{SBM}

\begin{theorem}
\label{thm:finitebounds}
Let $Y \sim \SBM(\theta, B)$ with $B$ full rank, $\max B \geq \log n/n$, and minimum absolute eigenvalue $\lambda_B > 0$. Let $\eps \in (0, \infty]$, and let $\hat \theta_\eps$ be the result of Algorithm~\ref{alg:spectral-kmeans} on $\M_\eps(Y)$ (where $\M_\infty(Y) = Y$), using an approximation error of $\gamma$. Let:
\begin{equation}
\label{eq:g-eps}
g_\eps(B) = \begin{cases}
    \frac{e^\eps + 1}{e^\eps - 1} \left(\max B + \frac{1}{e^\eps - 1}\right), & \eps < \infty \\
    \max B, & \eps = \infty
\end{cases}
\end{equation}
There exists a universal constant $c_1$ such that if:
\begin{equation}
\label{eq:sbm-condition}
\frac{(2 + \gamma) kn}{\nmin^2 \lambda_B^2} g_\eps(B) < c_1^{-1} ,
\end{equation}
then with probability at least $1 - n^{-1}$:
$$
\begin{aligned}
&\tilde L(\theta, \hat \theta_\eps) \leq c_1 \frac{(2 + \gamma) kn}{\nmin^2 \lambda_B^2} g_\eps(B) \\
\text{and}
\quad
& L(\theta, \hat \theta_\eps) \leq c_1 \frac{(2 + \gamma) kn_\text{max}'}{\nmin^2 \lambda_B^2} g_\eps(B) .
\end{aligned}
$$
\end{theorem}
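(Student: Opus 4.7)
The strategy is to adapt the non-private analysis of \citet{lei2015consistency} to the downshifted edge-flipped adjacency matrix $A_\downarrow := (\depsMeps)(Y)$, exploiting the fact that $A_\downarrow$ and $Y$ have \emph{the same expected spectral embedding} (up to a scalar reshaping of eigenvalues). Indeed, by Corollary~\ref{thm:scaledexpectation}, $E[A_\downarrow] = \tfrac{e^\eps-1}{e^\eps+1} P$ off the diagonal, where $P$ is the usual SBM population matrix; hence the leading $k$-dimensional invariant subspace of $E[A_\downarrow]$ coincides with that of $P$, and the expected row-embeddings $U$ are unchanged. Only the noise scale and the eigen-gap depend on $\eps$.

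\emph{Step 1 (Embedding error in Frobenius norm).} Apply Lemma~\ref{thm:frobenius} to produce an orthogonal matrix $O$ with
\[
\|\hat U O - U\|_F^2 \;\leq\; C_0 \, \frac{k\,n}{\nmin^2 \lambda_B^2}\, g_\eps(B)
\]
on an event of probability at least $1 - n^{-1}$. Two private-case factors produce $g_\eps(B)$: by Corollary~\ref{thm:closure}, $\Meps(Y) \sim \SBM(\theta,\tau_\eps(B))$, so a Lei--Rinaldo-style matrix concentration gives $\|\Meps(Y) - E\Meps(Y)\| \lesssim \sqrt{n \max \tau_\eps(B)}$, i.e.\ a noise inflation by $\max \tau_\eps(B) = \tfrac{1}{e^\eps+1} + \tfrac{e^\eps-1}{e^\eps+1}\max B$; and Davis--Kahan introduces a $\bigl(\tfrac{e^\eps+1}{e^\eps-1}\bigr)^2$ factor because the $k$-th eigenvalue of $E[A_\downarrow]$ is rescaled by $\tfrac{e^\eps-1}{e^\eps+1}$. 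The product of these two factors simplifies to exactly $g_\eps(B)$ as defined in (\ref{eq:g-eps}).

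\emph{Step 2 (Geometry of $U$).} Because $B$ is full rank, the rows of $U$ take exactly $k$ distinct values $\mu_1,\ldots,\mu_k$, one per block, with
\[
\|\mu_j - \mu_{j'}\|_2^2 \;\geq\; \frac{1}{n_j} + \frac{1}{n_{j'}} \;\geq\; \frac{2}{\nmax},
\]
by the standard block-constant-matrix computation (Lei--Rinaldo Lemma~2.1).

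\emph{Step 3 ($k$-means $\Rightarrow$ misclassification).} Following Lei--Rinaldo's Lemma~5.3, a $(1+\gamma)$-approximate $k$-means output $\hat\theta$ on the rows of $\hat U$ misclassifies a set $S$ of nodes satisfying
\[
|S| \;\leq\; \frac{(8 + 4\gamma)\,\|\hat U O - U\|_F^2}{\min_{j \ne j'}\|\mu_j - \mu_{j'}\|_2^2},
\]
provided each block retains strictly more than $n_j/2$ correctly-placed nodes (so that the label permutation is unambiguous). Combining Steps~1--2 yields $|S \cap C_j| \leq |S| \leq c_1 (2+\gamma)\, k n\, g_\eps(B)/(\nmin \lambda_B^2)$. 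Hypothesis~(\ref{eq:sbm-condition}) is precisely the inequality forcing $|S| < \nmin/2$, which validates Step~3's unambiguity assumption; dividing by $\nmin$ gives the worst-case bound on $\tilde L$. For the overall bound on $L$, observe that at most $\nmax'$ nodes can be misclassified into the \emph{largest} block from smaller blocks after any permutation is chosen, yielding the improved factor $\nmax'$ in place of $n$.

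\emph{Main obstacle.} The bulk of the technical work lives in Lemma~\ref{thm:frobenius}, where one must prove a spectral-norm concentration for $\Meps(Y)$ and apply Davis--Kahan to a matrix whose eigen-gap has been deliberately shrunk by the privacy mechanism. Granted that lemma, the proof of Theorem~\ref{thm:finitebounds} is a mechanical substitution of $g_\eps(B)$ for $\max B$ in the Lei--Rinaldo template; the delicate part is the bookkeeping that packages all $\eps$-dependent quantities into the single factor $g_\eps(B)$ so that the statement reduces cleanly to the non-private bound at $\eps = \infty$.
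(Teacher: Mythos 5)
Your proposal follows essentially the same route as the paper: invoke Lemma~\ref{thm:frobenius} for the Frobenius bound on $\|\hat U_\downarrow - U_\downarrow Q\|_F$ (with all $\eps$-dependence packaged into $g_\eps(B)$), note that $U_\downarrow = U$ has the block-constant geometry with separation $\sqrt{n_j^{-1} + n_\ell^{-1}}$, and feed this into the Lei--Rinaldo $k$-means-to-misclassification lemma, with condition~(\ref{eq:sbm-condition}) guaranteeing the permutation is well defined. The one step where your sketch diverges and is not quite right is the $\nmax'$ factor in the bound on $L$: your justification (``at most $\nmax'$ nodes can be misclassified into the largest block from smaller blocks'') is not the actual mechanism and does not obviously bound the total number of misclassified nodes, since nodes can be misclassified in arbitrary directions. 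The paper instead keeps the block-specific separations $\delta_j^2 = n_j^{-1} + (\max_{\ell \neq j} n_\ell)^{-1} \geq (\nmax')^{-1}$, so that $\sum_j |S_j| \leq \nmax' \sum_j |S_j| \delta_j^2$, and the weighted sum $\sum_j |S_j|\delta_j^2$ is what Lemma~5.3 of Lei--Rinaldo controls; your coarser uniform lower bound $2/\nmax$ on the squared separation would only yield an $\nmax$ factor. With that bookkeeping restored, the argument is the paper's.
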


Taking $\eps = \infty$ (i.e., the non-private setting) in Theorem~\ref{thm:finitebounds} recovers a result that is exactly equivalent to the results for SBM in \citet{lei2015consistency}. Consequently, the cost of privacy on misclassification is captured by $g_\eps(B)$. Noting that $g_\eps(B)$ is a decreasing function in $\eps$ and that $\lim_{\eps \to \infty} g_\eps(B) = g_\infty(B)$, Theorem~\ref{thm:finitebounds} provides a smooth interpolation between the known finite-sample misclassification bounds without privacy and the corresponding bounds under local DP.

We can use the finite-sample bounds from Theorem~\ref{thm:finitebounds} to construct particular asymptotic regimes under which $\hat \theta_\eps$ is consistent and derive convergence rates. For a given sequence of SBM networks, it suffices to show that:
$$
\frac{k n}{\nmin^2} g_\eps(B) = o(1) .
$$
If this holds, condition~(\ref{eq:sbm-condition}) is met for large $n$, and both measures of misclassification, $\tilde L(\theta, \hat \theta_\eps)$ and $L(\theta, \hat \theta_\eps)$, tend to zero. 

Since $g_\eps(B)$ explains the difference in classification performance between private and non-private clustering, a comparison of convergence rates can be boiled down to the differences in asymptotic behavior of $g_\eps(B)$. We note that if $\eps = \infty$, then $g_\eps(B) = \max B$. This will serve as our baseline. On the other hand, if $\eps < \infty$, then $g_\eps(B) = O(\max B + \zeta_\eps^{-1} + \zeta_\eps^{-2})$, where $\zeta_\eps = e^\eps - 1$, an increasing function of $\eps$ satisfying $\zeta_0 = 0$.  (See Fact~\ref{thm:g-inequality}.) Thus we have:
$$
\begin{aligned}
&\tilde L(\theta, \hat \theta_\eps) = \begin{cases}
    O_P \left( \frac{kn}{\nmin^2 \lambda_B^2} \max B \right) & \eps = \infty \\
    O_P \left( \frac{kn}{\nmin^2 \lambda_B^2} (\max B + \zeta_\eps^{-1} + \zeta_\eps^{-2}) \right) & \eps < \infty
\end{cases} \\
\text{and}
\quad
& L(\theta, \hat \theta_\eps) = \begin{cases}
    O_P \left( \frac{kn_\text{max}'}{\nmin^2 \lambda_B^2} \max B \right) & \eps = \infty \\
    O_P \left( \frac{kn_\text{max}'}{\nmin^2 \lambda_B^2} (\max B + \zeta_\eps^{-1} + \zeta_\eps^{-2}) \right) & \eps < \infty
\end{cases} .
\end{aligned}
$$

From this, it is clear that if $\eps$ and $\max B$ are bounded away from zero, we can obtain the same convergence rate under local DP as in the absence of privacy. For example, for a fixed privacy budget and a sequence of dense SBMs (i.e., one for which the edge probabilities do not tend to zero), we can effectively employ local DP with no cost to the convergence rate (up to a constant). For a sparse SBM (one in which $\max B \to 0$) with fixed privacy budget, however, this does not hold, as $g_\eps(B) = \Theta(1)$ for $\eps < \infty$, but $g_\infty(B) = \max B = o(1)$. Thus the convergence rate for sparse SBM slows by a factor of $(\max B)^{-1}$.

This slowing of convergence for sparse SBM limits the extent of sparsity that can be handled under this local DP mechanism. Consider a regime in which $k$ is fixed, communities grow proportionally ($\nmin = \Theta(n)$), and $B$ changes with $n$ only via some scaling parameter $\alpha_n \to 0$, i.e., $B = \alpha_n B_0$ for some fixed matrix $B_0$. In this case, we have $k$ constant, $\max B = \Theta(\alpha_n), \lambda_B = \Theta(\alpha_n)$, and so the worst-case misclassification is:
\begin{equation}
\label{eq:ex-sbm-scaled}
\tilde L(\theta, \hat \theta_\eps) = \begin{cases}
    O_P \left( \frac{1}{n \alpha_n} \right) & \eps = \infty \\
    O_P \left( \frac{\alpha_n + \zeta_\eps^{-1} + \zeta_\eps^{-2}}{n \alpha_n^2} \right) & \eps < \infty
\end{cases} .
\end{equation}
Theorem~\ref{thm:finitebounds} allows us to choose $\alpha_n$ as small as $\log n/n$, so we have consistency for non-private clustering down to $\log n/n$. For private clustering with a fixed privacy budget, however, we need $\alpha_n = \omega(n^{-1/2})$. The only way to achieve greater sparsity is to allow the privacy budget to grow arbitrarily large. For example, choosing $\eps = \log(1 + \alpha_n^{-1})$, we have $\zeta_\eps^{-1} = \alpha_n$, and thus we can recover the same convergence rate and accommodate the same level of sparsity as without privacy---but at the cost of allowing unbounded privacy loss for large $n$.

\subsection{DCBM}

Generalizing the theoretical results for SBM to DCBM requires a bit of care. As described in Section~\ref{privacy-mechanism}, in the more general setting of DCBM, the expected embeddings for a given block fall along distinct rays emanating from the origin---in contrast with distinct points in the case of SBM. It is for this reason that Algorithm~\ref{alg:spectral-normalized-kmedians} is used for DCBM. The key theorethical result from this more general treatment is given below.

\begin{theorem}
\label{thm:finiteboundsdcbm}
Let $Y \sim \DCBM(\theta, \psi, B)$ with $\max_{i \in C_j} \psi_i = 1$ for $j \in [k]$, $B$ full rank, $\max B \geq \log n/n$, and minimum absolute eigenvalue $\lambda_B > 0$. Let $\eps \in (0, \infty]$, and let $\hat \theta_\eps$ be the result of Algorithm~\ref{alg:spectral-normalized-kmedians} on $\M_\eps(Y)$ (where $\M_\infty(Y) = Y$), using an approximation error of $\gamma$. Let $g_\eps(B)$ as in Theorem~\ref{thm:finitebounds}. There exists a universal constant $c_2$ such that if:
\begin{equation}
\label{eq:dcbm-condition}
\frac{(2.5 + \gamma) \sqrt{kn \, g_\eps(B)}}{\tilde n_\text{min} \lambda_B} < c_2^{-1} \frac{\nmin}{\sqrt{\sum_{j = 1}^k n_j^2 \nu_j}},
\end{equation}
then with probability at least $1 - n^{-1}$:
$$
L(\theta, \hat \theta_\eps) \leq c_2 \frac{(2.5 + \gamma)}{\tilde n_\text{min} \lambda_B} \sqrt{\frac kn \left( \sum_{j = 1}^k n_j^2 \nu_j \right) g_\eps(B)}.
$$
\end{theorem}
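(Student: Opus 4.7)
The plan is to follow the proof strategy for DCBM in \citet{lei2015consistency}, using the privacy-aware concentration of Lemma~\ref{thm:frobenius} as a drop-in replacement for the non-private spectral concentration. Corollary~\ref{thm:scaledexpectation} guarantees that the eigenvectors of $E[(\depsMeps)(Y)]$ coincide with those of $P = E[Y]$ off-diagonal, so the population target $U$ is unchanged by privacy; only the rate of concentration degrades, and that degradation is captured entirely by the factor $g_\eps(B)$ appearing in the Frobenius bound.

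First I would invoke Lemma~\ref{thm:frobenius} to obtain a Frobenius-norm control of the form
$$
\|\hat U_\downarrow O - U\|_F \leq C \frac{\sqrt{k n \, g_\eps(B)}}{\tilde n_\text{min} \lambda_B}
$$
for some orthogonal $O \in \R^{k \times k}$, holding with probability at least $1 - n^{-1}$ under the density condition $\max B \geq \log n / n$. This step combines a privacy-aware operator-norm concentration for $(\depsMeps)(Y) - E(\depsMeps)(Y)$ with Davis--Kahan and the eigengap estimate $\lambda_k(P) = \Theta(\tilde n_\text{min} \lambda_B)$; the scaling factor $(e^\eps + 1)/(e^\eps - 1)$ produced by Corollary~\ref{thm:scaledexpectation} on both the noise variance and the eigengap is absorbed into $g_\eps(B)$.

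Next I would analyze the row normalization. For a DCBM the rows of $U$ take the form $U_{i*} = \psi_i h_{\theta_i}$ for vectors $h_1, \dots, h_k \in \R^k$ determined by $B$ and the block sizes, so the row-normalized expected embeddings $U'_{i*}$ take only $k$ distinct unit vectors $\bar h_1, \dots, \bar h_k$, one per block, with a strictly positive minimum pairwise separation inherited from the full-rank assumption on $B$. The key technical step is to propagate the bound on $\|\hat U_\downarrow O - U\|_F$ through the per-row normalization: each row's error is inflated by at most a factor of $\|U_{i*}\|_2^{-1} \propto \sqrt{\tilde n_{\theta_i}}/\psi_i$, and the aggregate inflation across all rows is exactly $\sum_{j=1}^k \tilde n_j \sum_{i \in C_j} \psi_i^{-2} = \sum_{j=1}^k n_j^2 \nu_j$, yielding a bound on $\|\hat U_\downarrow' - U' Q\|_F^2$ proportional to $\bigl( \sum_j n_j^2 \nu_j \bigr) \cdot k n \, g_\eps(B) / (\tilde n_\text{min}^2 \lambda_B^2)$. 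The extra $0.5$ in the constant $(2.5 + \gamma)$ absorbs the rounding overhead incurred by this normalization relative to the $(2 + \gamma)$ appearing in the SBM case.

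Finally I would apply the $k$-medians argument of Lei and Rinaldo: any node that a $(1+\gamma)$-approximate $k$-medians clustering can misclassify must lie at a distance from its target centroid exceeding a constant fraction of the minimum separation between the $\bar h_j$, so the total count of misclassified nodes is bounded via a Markov-type inequality by a constant multiple of $\|\hat U_\downarrow' - U' Q\|_F^2$. Condition~(\ref{eq:dcbm-condition}) is calibrated to guarantee this count is strictly less than $\nmin$, ensuring the label permutation $\sigma$ can be matched unambiguously across blocks and that conversion from squared Frobenius error to overall misclassification rate produces the stated square-root bound on $L(\theta, \hat \theta_\eps)$. The main obstacle will be the heterogeneity bookkeeping in the row-normalization step---verifying that per-row error inflation aggregates exactly into $\sum_j n_j^2 \nu_j$ rather than a looser quantity, and handling the nodes excluded via $I_+$ in Algorithm~\ref{alg:spectral-normalized-kmedians} without losing a factor of $n/\nmin$ in the constants.
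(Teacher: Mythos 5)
Your overall architecture is the right one and matches the paper's: invoke Lemma~\ref{thm:frobenius} as the privacy-aware replacement for the non-private spectral concentration, exploit the fact that $U_\downarrow$ can be taken equal to $U$, row-normalize, run the approximate $k$-medians counting argument, and use condition~(\ref{eq:dcbm-condition}) to guarantee the set of possibly-misclassified nodes has size below $\nmin$ so that every block retains a correctly anchored node. However, there is a genuine technical misstep in the middle of your argument. You propagate the error through the normalization by bounding $\| \hat U_\downarrow' - U' Q \|_F^2$ and then bound the misclassification count by a multiple of that \emph{squared} Frobenius norm. That is the $k$-means-style argument appropriate to Algorithm~\ref{alg:spectral-kmeans}; Algorithm~\ref{alg:spectral-normalized-kmedians} solves a $k$-medians problem, whose objective is the $(2,1)$ norm $\|A\|_{2,1} = \sum_i \|A_{i*}\|_2$, and the Markov-type count bound there is \emph{linear} in $\| \hat \Theta_+ \hat X - U_\downarrow'' Q\|_{2,1}$, hence linear in $\|\hat U_\downarrow - U_\downarrow Q\|_F$. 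Concretely, the identity you want --- aggregate inflation equal to $\sum_j n_j^2 \nu_j$ --- arises from Cauchy--Schwarz applied to the sum of \emph{first powers},
$$
\sum_{i} \frac{\| (\hat U_\downarrow)_{i*} - (U_\downarrow Q)_{i*} \|_2}{\|(U_\downarrow)_{i*}\|_2} \leq \| \hat U_\downarrow - U_\downarrow Q \|_F \sqrt{\textstyle\sum_{i} \|U_{i*}\|_2^{-2}} = \| \hat U_\downarrow - U_\downarrow Q \|_F \sqrt{\textstyle\sum_{j} n_j^2 \nu_j},
$$
using $\|U_{i*}\|_2^{-2} = \tilde n_{\theta_i}\psi_i^{-2}$. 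The analogous sum of squares, $\sum_i \|(\hat U_\downarrow)_{i*} - (U_\downarrow Q)_{i*}\|_2^2\,\|U_{i*}\|_2^{-2}$, is \emph{not} controlled by $\|\hat U_\downarrow - U_\downarrow Q\|_F^2 \sum_i \|U_{i*}\|_2^{-2}$; the natural bound there involves $\max_i \|U_{i*}\|_2^{-2}$ instead, which is a different (and block-heterogeneity-sensitive) quantity. Moreover, if your squared-norm count bound were carried through, the resulting misclassification rate would scale like $g_\eps(B)/(\nmintilde^2\lambda_B^2)$, i.e.\ the \emph{square} of the rate claimed in the theorem, not the stated $\sqrt{g_\eps(B)}/(\nmintilde\lambda_B)$ bound.

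The second item you flag as an obstacle --- the zero rows excluded from $I_+$ --- is resolved in the paper without any loss of a factor $n/\nmin$ by a separate Cauchy--Schwarz step: with $I_0 = \{ i : (\hat U_\downarrow)_{i*} = 0\}$, one has $\|\hat U_\downarrow - U_\downarrow Q\|_F^2 \geq \sum_{i \in I_0} \|(U_\downarrow Q)_{i*}\|_2^2 \geq |I_0|^2 / \sum_{i}\|U_{i*}\|_2^{-2}$, so $|I_0| \leq \|\hat U_\downarrow - U_\downarrow Q\|_F \sqrt{\sum_j n_j^2 \nu_j}$, which is of exactly the same order as the bound on $|S|$ and is simply added to it (this addition is also where the constant inflates from $2+\gamma$ to $2.5+\gamma$). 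With the $(2,1)$-norm counting argument substituted for your squared-Frobenius one and the $|I_0|$ bound added, your outline becomes the paper's proof.
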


As was the case with SBM, the results of Theorem~\ref{thm:finiteboundsdcbm} for $\eps = \infty$ (non-private setting) match the original results of \citet{lei2015consistency}. The cost of privacy is once again determined by the function $g_\eps(B)$. In this case, however, the bound changes with the square root of $g_\eps(B)$.

Comparing the results of Theorems~\ref{thm:finitebounds} and \ref{thm:finiteboundsdcbm}, the more general result involves additional parameters, of course, but also generally provides a weaker result. Ignoring constants, condition (\ref{eq:dcbm-condition}) is more stringent than (\ref{eq:sbm-condition}), as $\nmintilde \leq \nmin$, and $\nmin / \sqrt{ \sum_{j = 1}^k n_j^2 \nu_j} \leq k^{-1/2}$. The resulting upper bound on $L(\theta, \hat \theta_\eps)$ also results in weaker convergence rates. In keeping with \citet{lei2015consistency}, Theorem~\ref{thm:finiteboundsdcbm} offers no bound on $\tilde L(\theta, \hat \theta_\eps)$, as its proof bounds only the total number of misclassified nodes in the network. A trivial bound for $\tilde L$ can be obtained by observing that:
\begin{equation}
\label{eq:l-tilde-trivial}
\tilde L(\theta, \hat \theta_\eps) \leq (n/\nmin) L(\theta, \hat \theta_\eps) .
\end{equation}

To simplify matters, we illustrate an asymptotic regime in which communities grow proportionally ($n_j = \Theta(n/k)$ for all $j \in [k]$) and assume that there exists a global lower bound $0 < a \leq \psi_i$ for $i \in [n]$. Under these conditions, we can state a simple asymptotic result.
\begin{lemma}
\label{thm:dcbm-asymptotics}
Suppose $Y$ satisfies the conditions of Theorem~\ref{thm:finiteboundsdcbm}, and suppose further that $0 < a \leq \psi_i \leq 1$ for all $i \in [n]$ and $n_j = \Theta(n/k)$ for all $j \in [k]$. Then:
$$
\frac{k^2 \sqrt{g_\eps(B)}}{a^3 \lambda_B \sqrt n} = o(1) \implies L(\theta, \hat \theta_\eps) = O_P \left( \frac{k \sqrt{g_\eps(B)}}{a^3 \lambda_B \sqrt n} \right) .
$$
\end{lemma}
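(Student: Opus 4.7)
The plan is to derive the lemma as a direct corollary of Theorem~\ref{thm:finiteboundsdcbm} by computing the asymptotic orders of every block-level quantity appearing in the theorem and then performing the algebraic simplification. First I would catalog the parameter orders. Under $n_j = \Theta(n/k)$ we have $\nmin = \Theta(n/k)$. The bound $a \leq \psi_i \leq 1$ gives $a^2 n_j \leq \tilde n_j = \sum_{i \in C_j} \psi_i^2 \leq n_j$, so $\nmintilde = \Theta(a^2 n/k)$. Likewise, $n_j \leq \sum_{i \in C_j} \psi_i^{-2} \leq a^{-2} n_j$, from which the definition of $\nu_j$ yields $1 \leq \nu_j \leq a^{-2}$ (the lower bound being Cauchy--Schwarz), and hence $\sum_{j=1}^k n_j^2 \nu_j = O\bigl(a^{-2} n^2 / k\bigr)$.

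With these orders in hand, I would translate condition (\ref{eq:dcbm-condition}). Its left-hand side becomes $O\bigl(k^{3/2} \sqrt{g_\eps(B)} / (a^2 \lambda_B \sqrt n)\bigr)$, while its right-hand side becomes $\Omega(a/\sqrt k)$. Rearranging, (\ref{eq:dcbm-condition}) is implied by $k^2 \sqrt{g_\eps(B)} / (a^3 \lambda_B \sqrt n) < c$ for some universal constant $c > 0$, which holds for sufficiently large $n$ under the assumed $o(1)$ hypothesis. Thus Theorem~\ref{thm:finiteboundsdcbm} applies. Substituting the same orders into its conclusion gives
$$
L(\theta, \hat \theta_\eps) \leq c_2 \frac{(2.5 + \gamma)}{\nmintilde \lambda_B} \sqrt{\frac kn \left( \sum_{j = 1}^k n_j^2 \nu_j \right) g_\eps(B)} = O\!\left( \frac{k}{a^2 n \lambda_B} \cdot \frac{\sqrt{n \, g_\eps(B)}}{a} \right) = O\!\left( \frac{k \sqrt{g_\eps(B)}}{a^3 \lambda_B \sqrt n} \right),
$$
and since Theorem~\ref{thm:finiteboundsdcbm} guarantees this with probability at least $1 - n^{-1}$, the $O_P$ conclusion follows immediately.

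The main obstacle is purely bookkeeping: the $a$-dependence enters independently through $\nmintilde$ as $a^{-2}$ and through $\sqrt{\sum_j n_j^2 \nu_j}$ as $a^{-1}$, and these must combine cleanly to produce the $a^{-3}$ factor that appears in both the hypothesis and the conclusion. The cleanest way to keep the exponents straight is to track the condition and the conclusion side by side, since both involve the same ratio of $\nmintilde \lambda_B$ against a power of $\sqrt{(\sum_j n_j^2 \nu_j) \, g_\eps(B)}$; once the parameter orders are tabulated the result follows by inspection, with no further probabilistic content beyond that already supplied by Theorem~\ref{thm:finiteboundsdcbm}.
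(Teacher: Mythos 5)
Your proposal is correct and follows essentially the same route as the paper's proof: catalog the orders of $\nmin$, $\nmintilde$, and $\nu_j$ under the stated assumptions, verify that condition (\ref{eq:dcbm-condition}) reduces to the assumed $o(1)$ hypothesis (LHS $= O(k^{3/2}\sqrt{g_\eps(B)}/(a^2\lambda_B\sqrt n))$ versus RHS $= \Omega(a k^{-1/2})$), and then substitute the same orders into the conclusion of Theorem~\ref{thm:finiteboundsdcbm}. The only cosmetic difference is your sharper lower bound $\nu_j \geq 1$ via Cauchy--Schwarz where the paper uses $\nu_j \geq a^2$, which is immaterial since only the upper bound on $\sum_j n_j^2\nu_j$ enters the argument.
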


To compare this to the results seen in eq. (\ref{eq:ex-sbm-scaled}), consider again the case when $B = \alpha_n B_0$ for some fixed matrix $B_0$ and sequence $\alpha_n \to 0$. Then since $k$ is fixed, $n/\nmin = \Theta(1)$, and so combining Lemma~\ref{thm:dcbm-asymptotics} with eq. (\ref{eq:l-tilde-trivial}) yields a convergence rate of:
$$
\tilde L(\theta, \hat \theta_\eps) = \begin{cases}
    O_P \left( \frac{1}{a^3 \sqrt{n \alpha_n}} \right) & \eps = \infty \\
    O_P \left( \frac{\sqrt{\alpha_n + \zeta_\eps^{-1} + \zeta_\eps^{-2}}}{a^3 \alpha_n \sqrt n} \right) & \eps < \infty
\end{cases} .
$$
For SBM, where $a = 1$---or more generally for any regime in which $a$ is bounded away from zero---this bound is precisely the square root of the bound obtained in eq. (\ref{eq:ex-sbm-scaled}). Once again, we see that the convergence rate for sparse networks slows under private methods, and while some sparsity is attainable with privacy through this method, to accommodate the same level of sparsity with privacy as without, we would again need to allow the privacy-loss budget $\eps$ to grow arbitrarily large.

\section{Empirical Evaluations}
\label{empirical-evaluation}

\subsection{Simulation Studies}

The theoretical results above suggest that the edge-flip privacy mechanism can be used to achieve convergence for spectral clustering of SBM and DCBM networks that is similar in rate for dense networks, while slower for sparse networks. Here we evaluate these results empirically through simulation. To facilitate this analysis, we consider two special cases of the SBM and DCBM:

\begin{definition}
\label{def:ssbm}
The \textbf{symmetric SBM} is an SBM network consisting of $n$ nodes, $k$ equal-sized blocks, and $B = p \mathbf I_k + r \mathbf 1_k \mathbf 1_k^T$. For such a network, we write $Y \sim \SSBM(n, k, p, r)$.
\end{definition}

\begin{definition}
\label{def:sdcbm}
The \textbf{symmetric DCBM} is a DCBM network consisting of $n$ nodes, $k$ equal-sized blocks, $B = p \mathbf I_k + r \mathbf 1_k \mathbf 1_k^T$, and $\psi \in [a, 1]^n$ taking value 1 for the first node of a given block and values randomly drawn from $\mathrm{Uniform}(a, 1)$ elsewhere. For such a network, we write $Y \sim \SDCBM(n, k, p, r, a)$.
\end{definition}

Starting with SBM, we simulate two regimes following the symmetric SBM described in Definition~\ref{def:ssbm}. In the first setting, we use a dense symmetric SBM. We consider 16 values of $n$ ranging from $n = 30$ to $n = 12000$ and $\eps \in \{0.5, 0.75, 1, 1.5, 2, 3, 4, \infty\}$, where $\eps=\infty$ represents the original network (i.e., no privacy). For each $n, \eps$ pair, we draw 100 networks $Y^{(i)} \sim \SSBM(n, k = 3, p = 0.2, r = 0.05)$. We then apply Algorithm~\ref{alg:spectral-kmeans} to $\M_\eps(Y^{(i)})$, and report the mean overall misclassification rate $L(\theta^{(i)}, \hat \theta^{(i)})$ of eq. (\ref{eq:misclassification}). The results of this experiment are plotted on log--log axes in the left panel of Figure \ref{fig:sim-sbm}. Each curve, corresponding to a different value of $\eps$, appears to be approximately linear and match the rest in slope, suggesting that they have approximately equal polynomial rates of convergence. This is consistent with the results of Section~\ref{theoretical-results}.

For the second setting, we use a sparse symmetric SBM. Here we consider 13 values of $n$ ranging from $n = 10$ to $n = 12800$ and the same eight values of $\eps$ as in the first setting. For each $n, \eps$ pair, we draw 100 networks $Y^{(i)} \sim \SSBM(n, k = 2, p = 1.5 n^{-.3}, r = .15 n^{-.3})$, then apply Algorithm~\ref{alg:spectral-kmeans} to $\M_\eps(Y^{(i)})$, as in the first setting. The results are plotted on log--log axes in the right panel of Figure \ref{fig:sim-sbm}. Here, the curves corresponding to each value of $\eps$ are no longer parallel: for smaller values of $\eps$, the convergence rate slows.

\begin{figure}
\centering
\includegraphics[width=\textwidth]{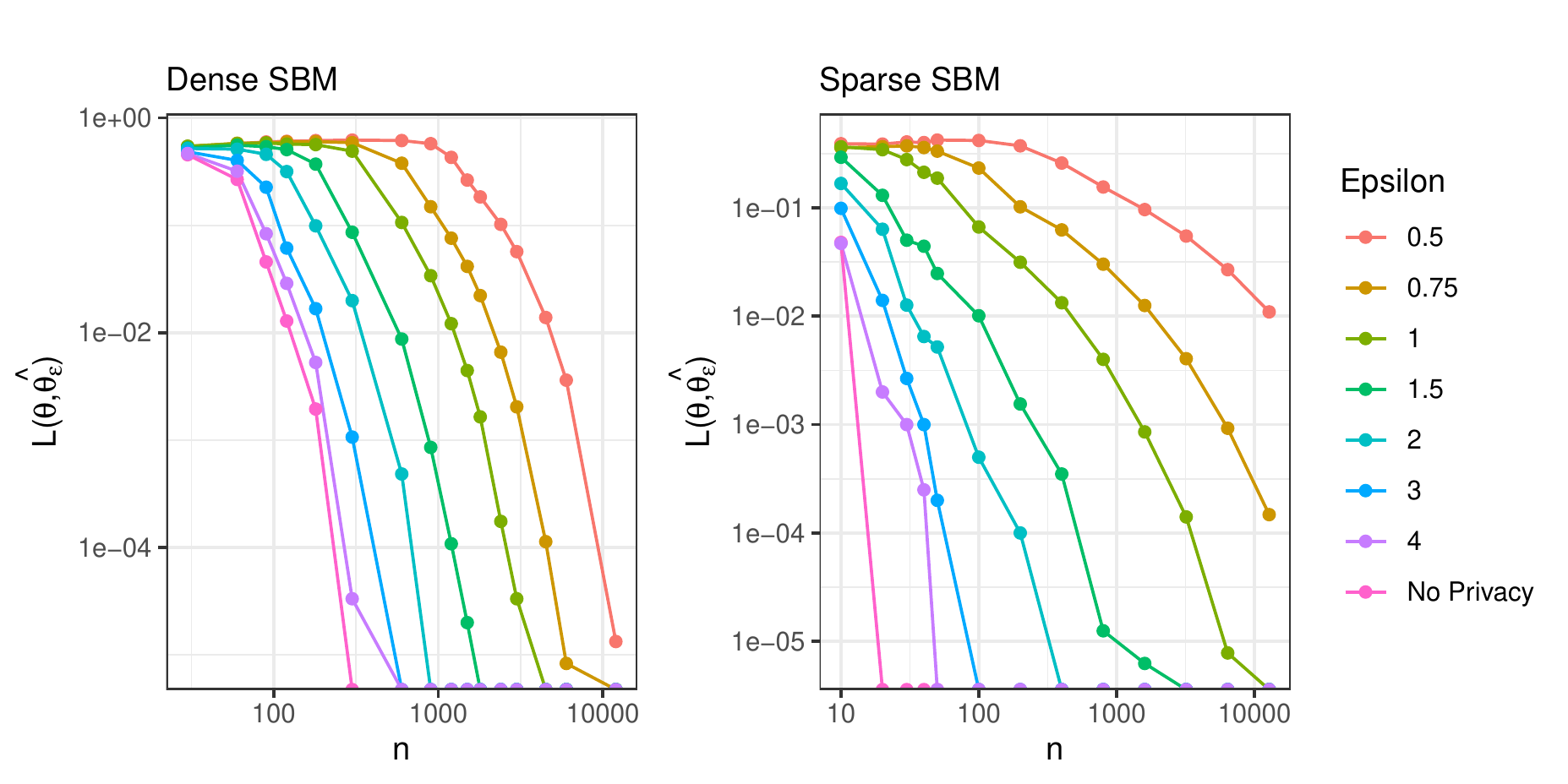}
\caption{Proportion of misclassified nodes in simulated SBM networks for various values of $\eps, n$. Left (dense): $Y \sim \SSBM(n, k = 3, p = 0.2, r = 0.05)$, Right (sparse): $Y \sim \SSBM(n, k = 2, p = 1.5 n^{-.3}, r = .15 n^{-.3})$}
\label{fig:sim-sbm}
\end{figure}

For DCBM, we simulate two additional regimes, one dense and one sparse, in a manner similar to the SBM simulations. In the dense setting, we use a dense symmetric DCBM (Definition~\ref{def:sdcbm}) with the same values of $n, \eps$ as used in the dense SBM simulations. For each $n, \eps$ pair, we draw 100 networks $Y^{(i)} \sim \SDCBM(n, k = 3, p = 0.4, r = 0.05, a = 0.3)$. In the sparse setting, we use the same $\eps$ values as in the other settings and 12 values of $n$ ranging from $n = 20$ to $n = 12800$. Then we draw 100 networks $Y^{(i)} \sim \SDCBM(n, k = 2, p = 2n^{-.25}, r = 0.1n^{-.25}, a = 0.3)$. In both DCBM settings, we report the mean overall misclassification rate after applying Algorithm~\ref{alg:spectral-normalized-kmedians}. The results of these simulations are plotted on log--log axes in Figure~\ref{fig:sim-dcbm}. In contrast with the SBM simulations, the DCBM simulations generally exhibit slower convergence. Similar to the SBM simulations, we see a clear slowing of convergence rate for smaller values of $\eps$ in the sparse setting.

\begin{figure}
\centering
\includegraphics[width=\textwidth]{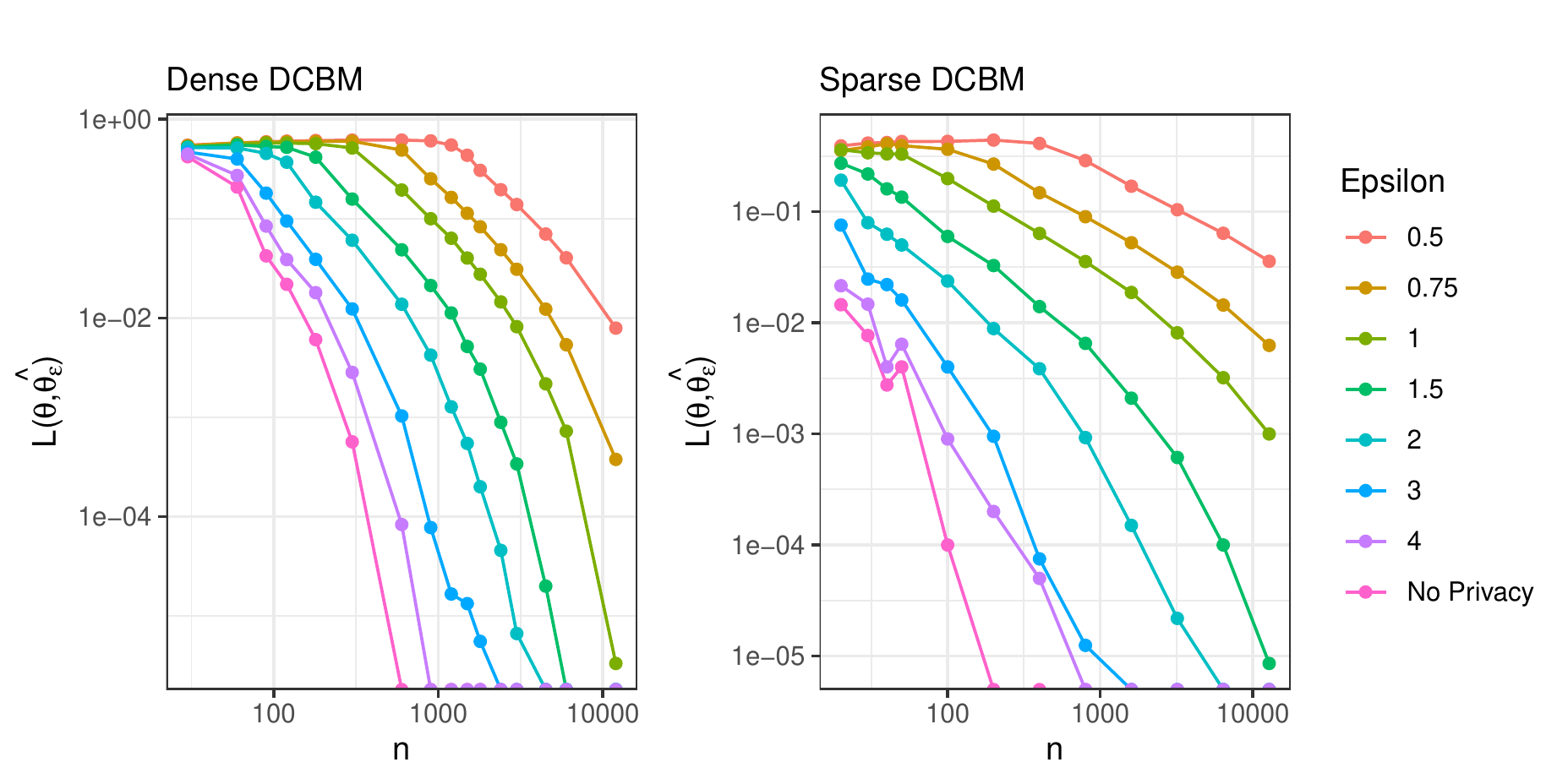}
\caption{Proportion of misclassified nodes in simulated DCBM networks for various values of $\eps, n$. Left (dense): $Y \sim \SDCBM(n, k = 3, p = 0.4, r = 0.05, a = 0.3)$, Right (sparse): $Y \sim \SDCBM(n, k = 2, p = 2n^{-.25}, r = 0.1n^{-.25}, a = 0.3)$}
\label{fig:sim-dcbm}
\end{figure}

While the results of all these simulations are consistent with the theory of Section~\ref{theoretical-results}, the precise nature in which convergence rate slows with $\eps$ in a sparse regime is not fully captured by the theoretical results. Indeed, for larger values of $\eps$, the convergence properties are similar to the non-private setting. In general, based on these simulations, it seems the convergence bounds given here and in \citet{lei2015consistency} are not tight, in the sense that the observed convergence rates appear to beat the theoretical guarantees, sometimes by considerable order.

\subsection{Performance on Observed Networks}

To assess the practicality of these methods, we applied the edge-flip mechanism and Algorithm~\ref{alg:spectral-normalized-kmedians} to several real-world datasets, then compared the performance of spectral clustering on the private networks over various privacy budgets. We show the trade-offs of privacy loss and accuracy.

\begin{figure}
\centering
\includegraphics[width=.7\textwidth]{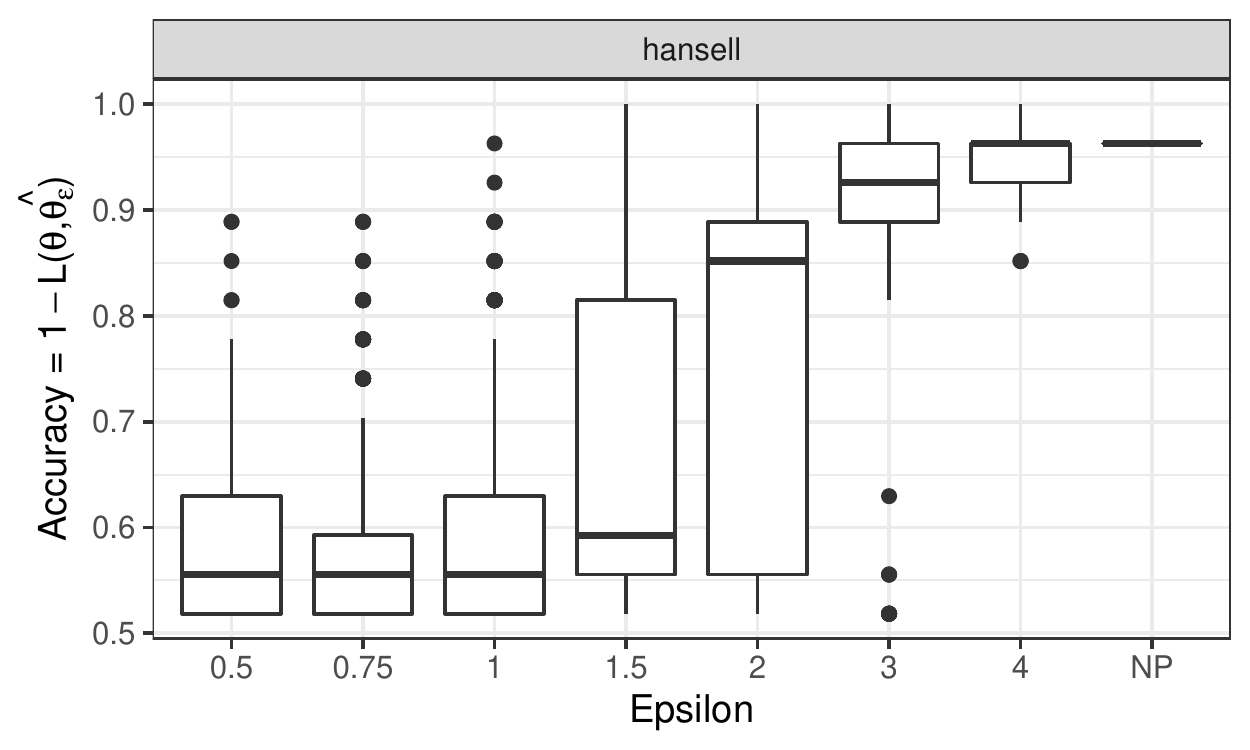}
\caption{Trade-off of privacy-loss and accuracy of private spectral clustering on a small dataset, Hansell's friendship data \citep{hansell1984cooperative}, with varying privacy-loss $\eps$, and ``NP" denoting non-private data.}
\label{fig:datasets_small}
\end{figure}

The first dataset we considered is a small, classical network used in the SBM literature: Hansell's friendship data \citep{hansell1984cooperative}, which was used in \citet{wang1987stochastic,snijders_estimation_1997}. This is a directed network of $n=27$ (13 male, 14 female) students, and the presence of an edge $(i, j)$ indicates that student $i$ considers student $j$ to be a friend. We symmetrized the network by setting $Y_{ij} = \max \{ Y_{ij}, Y_{ji} \}$, and we used the students' sexes as ground-truth labels.

For this small network, we considered $\eps \in \{0.5, 0.75, 1, 1.5, 2, 3, 4, \infty \}$, and for each value of $\eps$, we applied the privacy mechanism and spectral clustering algorithm 500 times. Box plots showing the overall classification accuracy, $1 - L(\theta, \hat \theta)$ from eq. \ref{eq:misclassification}, for each value of $\eps$ are given in Figure \ref{fig:datasets_small}. Accuracy remains high for values of $\eps > 2$; for such a small network, this is about as good as we can expect.

\begin{figure}
\centering
\includegraphics[width=\textwidth]{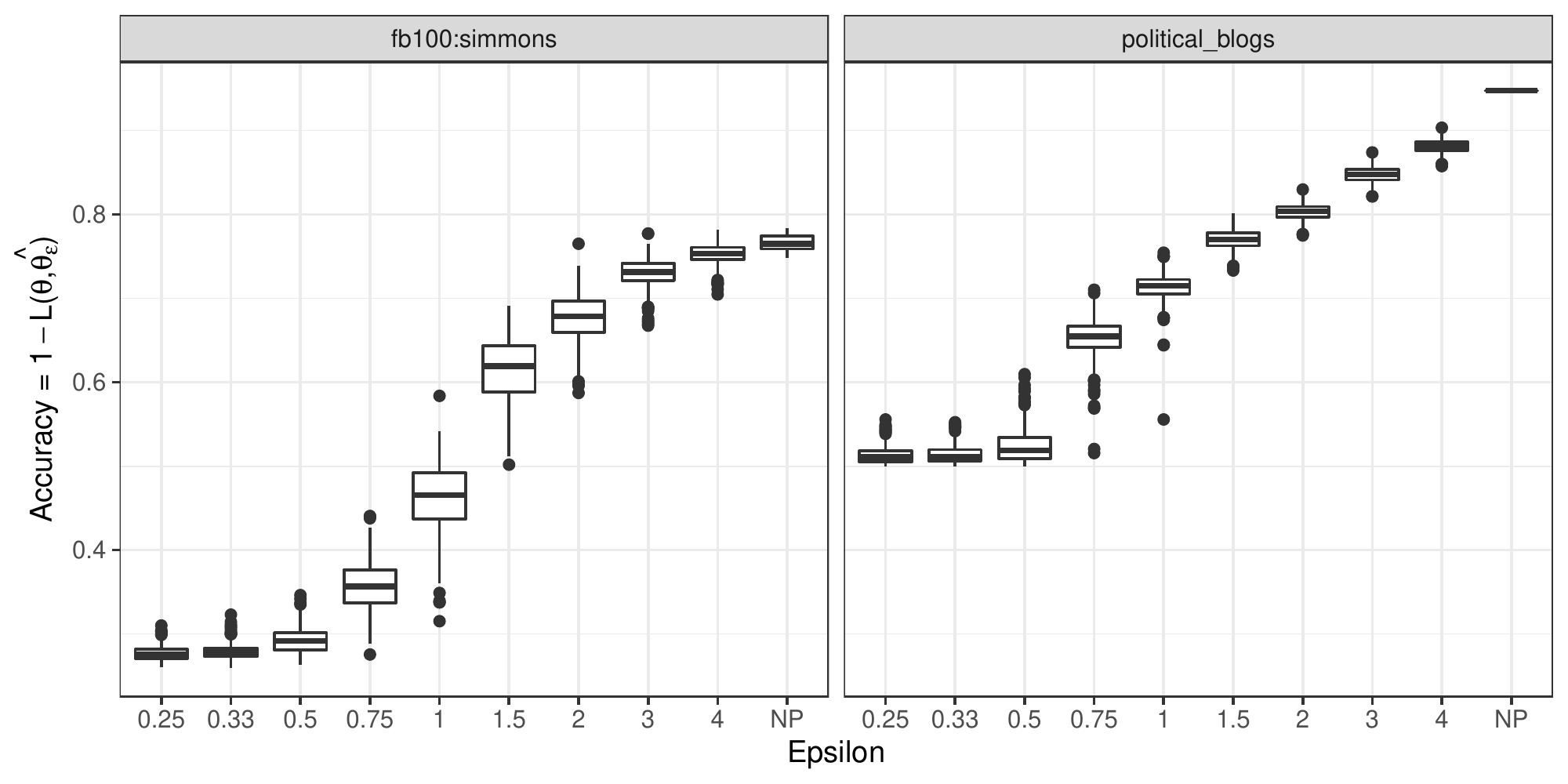}
\caption{Trade-off of privacy-loss and accuracy of private spectral clustering on well-known DCBM datasets, with varying privacy-loss $\eps$, and ``NP" denoting non-private data. Left: Facebook friendships of Simmons College students \citep{traud2012social}, Right: political blogs network \citep{adamic2005political}}
\label{fig:datasets_dcbm}
\end{figure}

Next, we considered two datasets from the DCBM literature. The first network consists of $n=1137$ Facebook users from Simmons College \citep{traud2012social,chen2018convexified}, and edges represent friendships. Each student belongs to one of $k=4$ class years (2006 to 2009). These class years are used as ground-truth group memberships, which are inferred through their friendships. The second dataset is the well-known network of political blogs from \citet{adamic2005political}, which has been widely used in the DCBM literature \citep{karrer2011stochastic,jin2015fast,chen2018convexified,abbe_community_2018}. This network consists of $n=1222$ blogs on U.S. politics, which have been categorized as either left-leaning or right-leaning ($k=2$ groups). An edge in this network represents the existence of a hyperlink between the two blogs, and these hyperlinks are used to infer the left- or right-leaning label for each node. The performance of the private spectral clustering methods on these networks is shown in Figure~\ref{fig:datasets_dcbm}. In both of these datasets, we see a steady decline in performance as $\eps$ decreases, without a clear inflection point.

\begin{figure}
\centering
\includegraphics[width=.8\textwidth]{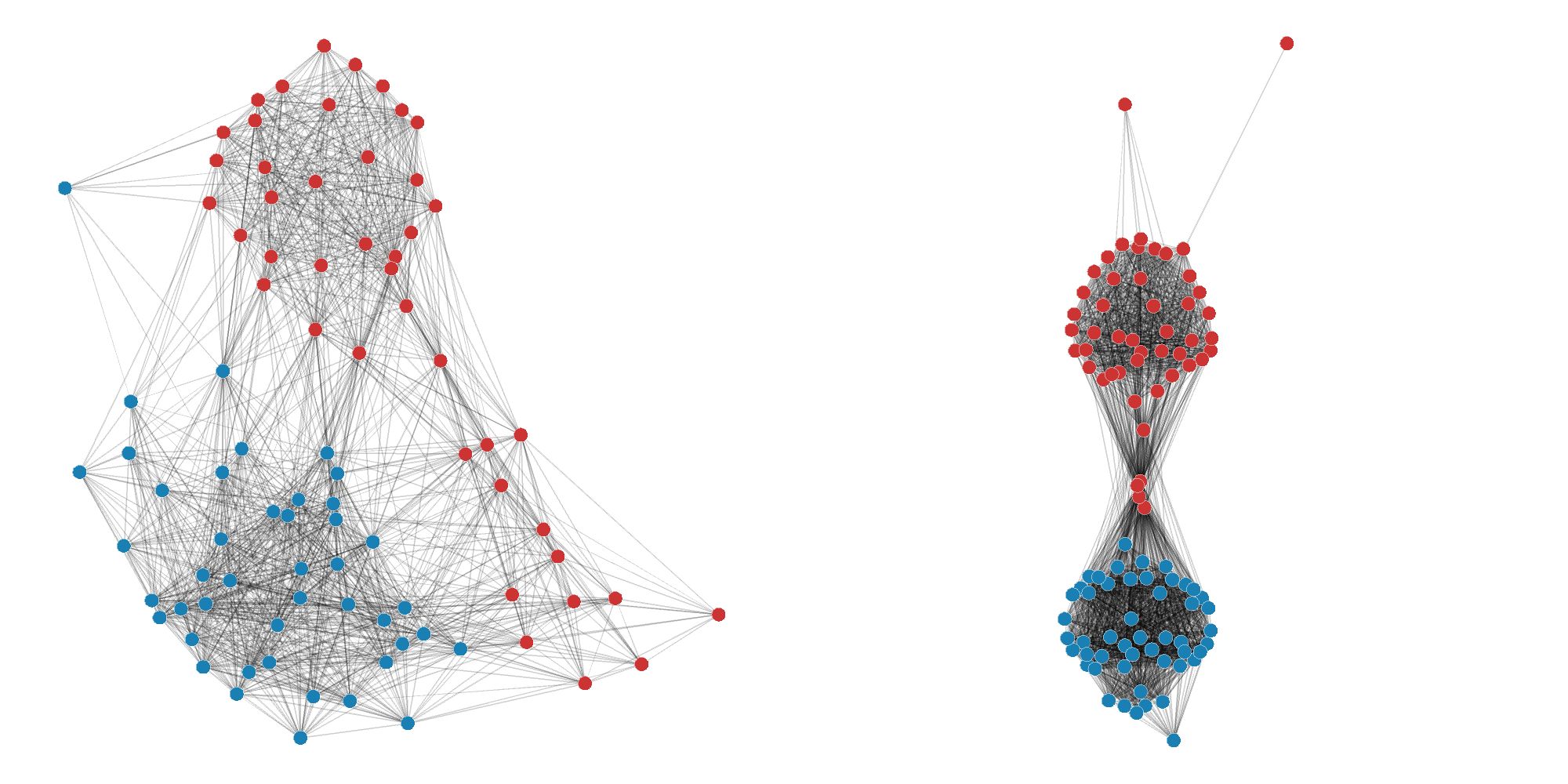}
\caption{Visualization of congressional voting networks for 70th U.S. Senate (left) and 110th U.S. Senate (right). Democrats are depicted as blue nodes and Republican as red.}
\label{fig:networks_senate}
\end{figure}

The theoretical and simulated results from earlier suggest that these methods are best paired with suitably large or dense networks. A relevant collection of networks can be found, for example, in \citet{andris2015rise}, where networks of Democrat and Republican members of the U.S. House of Representatives are constructed based on voting similarity. Based on their methods and data from \citet{voteview2021}, we reconstructed these networks for a number of sessions of the U.S. House of Representatives and U.S. Senate.\footnote{Although these networks are produced from public information, networks involving public figures like congresspeople illustrate a case where edge privacy is particularly relevant, since the set of nodes is public knowledge, but their interactions or relations may be sensitive.}

Changes over time in the patterns of voting among congresspeople offer us a range of networks that span nearly complete separation of the parties (particularly in recent years) to networks that exhibit greater levels of connectivity across parties. Figure \ref{fig:networks_senate} depicts two such networks, with the 70th U.S. Senate on the left and the 110th U.S. Senate on the right. In the 70th Senate, 13\% of cross-party pairs are connected, while 76\% and 59\% of Democrat and Republican pairs are connected (respectively). In the 110th Senate, 9\% of cross-party pairs are connected, while 98\% and 88\% of Democrat and Republican pairs are connected (respectively).

\begin{figure}
\centering
\includegraphics[width=\textwidth]{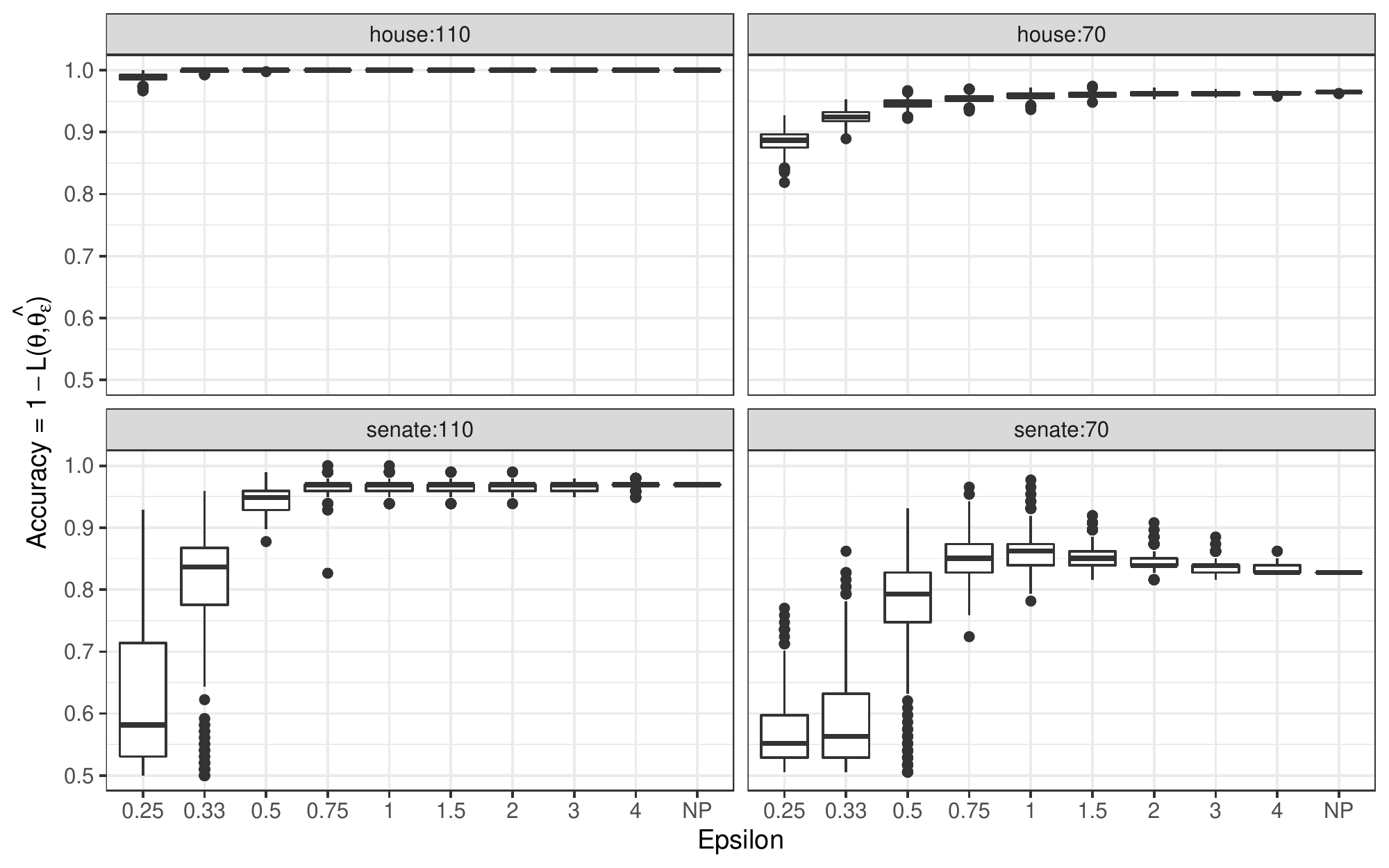}
\caption{Trade-off of privacy-loss and accuracy of private spectral clustering on U.S. Congressional voting networks. Clockwise from top left: 110th House of Representatives, 70th House of Representatives, 70th Senate, 110th Senate}
\label{fig:datasets_congress}
\end{figure}

For the 70th Senate ($n=87$), 110th Senate ($n=98$), 70th House ($n=425$), and 110th House ($n=423$), we applied the private spectral clustering methods 50 times for each value of $\eps \in \{0.25, 0.33, 0.5, 0.75, 1, 1.5, 2, 3, 4, \infty \}$. The results are depicted in Figure \ref{fig:datasets_congress}. The high density and relatively low variability of degree in these networks enables impressive clustering performance: for the House networks, clustering performance is solid for all $\eps$ considered; in the smaller Senate networks, we see a drop in performance for values of $\eps < 0.5$.

\section{Discussion}
\label{discussion}

Just as SBM and DCBM are fundamental network models featuring community structure, the edge-flip mechanism is a fundamental network privacy mechanism providing local differential privacy on networks at the edge level. A great advantage of the edge-flip mechanism is that it produces a synthetic network with clear distributional properties. In particular, the explicit mixture distribution of $\M_\eps(Y)$ described in Lemma~\ref{thm:mixture}---in contrast with the unclear distributional properties of some alternative mechanisms---greatly facilitates the process of accounting for privacy in various statistical procedures. Moreover, by nature of producing a synthetic network, any number of statistical procedures can be applied to the single output $\M_\eps(Y)$ with a fixed privacy budget $\eps$.

Our work demonstrates how to use the clear distribution of $\M_\eps(Y)$ to extend the theory of consistent community detection to the local DP setting. This is, to our knowledge, the first attempt to address this important problem. Algorithms~\ref{alg:spectral-kmeans} and \ref{alg:spectral-normalized-kmedians} represent modest extensions to the algorithms originally proposed in \citet{lei2015consistency}, requiring only the addition of a small ``downshift" transformation to unlock a powerful privacy guarantee with a clear accounting of the cost to clustering performance. In fact, Lemma~\ref{thm:closure} suggests that for SBM networks in particular, this downshift transformation is optional, as $\M_\eps(Y)$ is itself an SBM whose community structure is identical to $Y$.

Focusing on the specific case of SBM, we may ask whether it is preferable to perform clustering for SBM on $(\depsMeps)(Y)$, as prescribed by Algorithm~\ref{alg:spectral-kmeans}, or to proceed with ordinary spectral clustering on $\M_\eps(Y)$ per the closure property of Lemma~\ref{thm:closure}. By combining the closure property with the $\eps = \infty$ case of Theorem~\ref{thm:finitebounds}, one can obtain performance bounds in terms of the parameter $\tau_\eps(B)$. In some cases, these bounds may be superior to the bounds resulting from Theorem~\ref{thm:finitebounds} alone.\footnote{For example, if $B$ is positive-definite, then one can show that $g_\infty(\tau_\eps(B)) / \lambda_{\tau_\eps(B)}^2 < g_\eps(B) / \lambda_B^2$ via Weyl's inequality.} Nonetheless, we have opted to present the results for the downshifted procedure for the sake of generality. This allows us to keep the SBM misclassification bounds in terms of $\lambda_B$ instead of $\lambda_{\tau_\eps(B)}$, where a general relationship between these two quantities is elusive.

The benefits of the edge-flip mechanism's tractable distribution are, of course, not limited to the network models considered in this paper, nor to the field of network clustering. For example, \citet{karwa_sharing_2017} have previously shown how to adjust inferential procedures for exponential random graph models using the edge-flip mechanism. These examples are likely only scratching the surface of the theory that can be extended to accommodate privacy under the edge-flip mechanism.

The primary drawback to the edge-flip mechanism is its relationship to sparse networks. We posit that this limitation is inherent to working with a local-DP synthetic network, in which the magnitude of noise required for privacy begins to dwarf the signal present in a sparse network. Looking at the edge-flip mechanism in particular, for a sparse network $Y$, the mixture $\M_\eps(Y)$ will be considerably more dense, and a disproportionate number of its edges will result from the privacy mechanism, not the original network of interest. Although these changes preserve key properties of SBM and DCBM networks, reductions in sparsity dilute the signal-to-noise ratio considerably. While the theory supports the claim that these methods are consistent for SBM and DCBM networks exhibiting mild sparsity, empirical performance for sparse networks indicates a considerable utility loss when a very strong privacy guarantee is applied to a sparse network. Other privacy mechanisms preserving the sparsity of a network, especially non-local mechanisms, could be considered for greater performance in these cases.

The trade-off of privacy-loss and accuracy that we observe for label recovery on dense networks such as the Congressional voting networks, where considerable accuracy is maintained at $\eps = 0.5$, suggests that the methods employed here are of more than just theoretical interest. By comparison, in their work with exponential random graph models and the edge-flip mechanism, \citet{karwa_sharing_2017} use values of $\eps$ ranging from 3 to 6 on a network of similar size. Nonetheless, it would be useful to develop theoretical results for alternative privacy mechanisms that can accommodate greater sparsity.

Lastly, while the results given here focus on spectral clustering of SBM and DCBM, some of the insights from these techniques are suggestive of further theoretical developments in related fields. For example, concentration bounds for the spectral embeddings of the downshifted private network could likely be extended in the more general family of random dot product graphs \citep{athreya2017statistical}. This opens a wide range of avenues for future work.

\section{Proofs of Main Results}
\label{proofs}

The proofs of the results from Section~\ref{theoretical-results} closely follow the techniques of \citet{lei2015consistency}. For both SBM and DCBM, we will require the following notation. Let $\eig_k(\cdot) : \R^{n \times n} \to \R^{n \times k}$ denote a function that returns the eigenvectors of its argument corresponding to the $k$ largest eigenvalues in absolute value.

Let $Y \sim \DCBM(\theta, \psi, B)$, where $\psi = 1$ for SBM. Let $P$ be the $n \times n$ matrix with entries $P_{ij} = \psi_i \psi_j B_{\theta_i \theta_j}$. Note that $E[Y] = P - \diag(P)$, as $Y$ has zeros on the diagonal by construction. Let $\hat U = \eig_k(Y)$ denote the eigenvectors of the observed non-private adjacency matrix $Y$, and $U = \eig_k(P)$ denote the eigenvectors of the expected adjacency matrix (plus a diagonal).

We will use a $\downarrow$ subscript to denote private analogs to these quantities. Assuming $\eps < \infty$, let $Y_\downarrow = (\depsMeps)(Y)$ denote the edge-flipped and downshifted synthetic network. Since $E[Y_\downarrow] = \frac{e^\eps - 1}{e^\eps + 1} E[Y]$ (Corollary~\ref{thm:scaledexpectation}), we let $P_\downarrow = \frac{e^\eps - 1}{e^\eps + 1} P$, and let $\hat U_\downarrow = \eig_k(Y_\downarrow)$ and $U_\downarrow = \eig_k(P_\downarrow)$. If $\eps = \infty$, we have no privacy, so we let $Y_\downarrow = Y, P_\downarrow = P, \hat U_\downarrow = \hat U, U_\downarrow = U$.

A critical fact for the proofs of the main theorems is that the matrix $U_\downarrow$ can be chosen to be precisely equal to $U$, even if $\eps < \infty$, since if $V \Lambda V^T$ is an eigendecomposition of $P$, then $V \left( \frac{e^\eps - 1}{e^\eps + 1} \Lambda \right) V^T$ is an eigendecomposition of $P_\downarrow$. For this reason, we will assume below that $U_\downarrow = U$.

We begin with a lemma that captures the main technical distinction between our results and those of \citet{lei2015consistency}.

\begin{lemma}
\label{thm:frobenius}
Suppose $Y \sim \DCBM(\theta, \psi, B)$ with $\max_{i \in C_j} \psi_i = 1$ for $j \in [k]$, $B$ full rank, $\max B \geq \log n/n$, and minimum absolute eigenvalue $\lambda_B > 0$. Let $\eps \in (0, \infty]$, and let $g_\eps(B)$ as defined in eq. (\ref{eq:g-eps}). There exists a universal constant $c_0$ and a $k \times k$ orthogonal matrix $Q$ such that:
$$
P \left( \| \hat U_\downarrow - U_\downarrow Q \|_F \leq c_0 \frac{2 \sqrt{2kn g_\eps(B)}}{\nmintilde \lambda_B} \right) \geq 1 - n^{-1} .
$$
\end{lemma}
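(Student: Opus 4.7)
The overall strategy mirrors the non-private template of \citet{lei2015consistency}: combine a Davis--Kahan-type perturbation inequality with a matrix concentration bound on $\|Y_\downarrow - P_\downarrow\|$, then divide by the smallest nonzero eigenvalue of $P_\downarrow$. Concretely, the $n \times k$ matrices $\hat U_\downarrow$ and $U_\downarrow = U$ hold the top-$k$ eigenvectors (by absolute eigenvalue) of $Y_\downarrow$ and $P_\downarrow = \tfrac{e^\eps-1}{e^\eps+1} P$ respectively, so the variant of Davis--Kahan used by Lei--Rinaldo yields an orthogonal $Q$ with
$$
\|\hat U_\downarrow - U_\downarrow Q\|_F \;\leq\; \frac{2\sqrt{2k}\,\|Y_\downarrow - P_\downarrow\|}{|\lambda_k(P_\downarrow)|} \;=\; \frac{2\sqrt{2k}\,\|Y_\downarrow - P_\downarrow\|}{\tfrac{e^\eps-1}{e^\eps+1}\,|\lambda_k(P)|}.
$$
The $\eps = \infty$ case just reduces to the Lei--Rinaldo bound verbatim, so the work is in the finite-$\eps$ case.

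The key technical step is bounding $\|Y_\downarrow - P_\downarrow\|$. Split it as $\|Y_\downarrow - E Y_\downarrow\| + \|E Y_\downarrow - P_\downarrow\|$. The second piece is deterministic and, by Corollary~\ref{thm:scaledexpectation}, equals $\tfrac{e^\eps-1}{e^\eps+1}\|\diag(P)\|_{\mathrm{op}} \leq \max B$, which is lower order. For the first piece, the entries of $Y_\downarrow - E Y_\downarrow$ above the diagonal are independent, bounded in magnitude by $1$, and each has variance $q_{ij}(1-q_{ij})$ with $q_{ij} = E\,[\M_\eps(Y)_{ij}] = \tfrac{1}{e^\eps+1} + \tfrac{e^\eps-1}{e^\eps+1}P_{ij}$. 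Using $q(1-q) \leq q$ and $P_{ij} \leq \max B$ gives the uniform variance bound
$$
\sigma_{\max}^2 \;\leq\; \frac{1}{e^\eps+1} + \frac{(e^\eps-1)\max B}{e^\eps+1} \;=\; \left(\tfrac{e^\eps-1}{e^\eps+1}\right)^{\!2} g_\eps(B).
$$
Under $\max B \geq \log n/n$ (which implies $n\sigma_{\max}^2 \gtrsim \log n$), the symmetric-matrix concentration bound of \citet{lei2015consistency} (their Theorem~5.2, adapted from Feige--Ofek) yields $\|Y_\downarrow - E Y_\downarrow\| \leq C\sqrt{n\sigma_{\max}^2}$ with probability at least $1 - n^{-1}$, and plugging in the variance bound gives $\|Y_\downarrow - P_\downarrow\| \leq C' \tfrac{e^\eps-1}{e^\eps+1}\sqrt{n\,g_\eps(B)}$ once the low-order $\max B$ term is absorbed.

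The final ingredient is the eigenvalue lower bound $|\lambda_k(P)| \geq \nmintilde\,\lambda_B$. This follows from the DCBM factorization $P = \Psi \Theta B \Theta^T \Psi$, where $\Theta \in \{0,1\}^{n\times k}$ is the membership matrix and $\Psi = \diag(\psi)$; under $\max_{i\in C_j}\psi_i = 1$, one has $\lambda_{\min}(\Theta^T \Psi^2 \Theta) = \nmintilde$, and a Courant--Fischer argument (exactly as in Lei--Rinaldo's proof of their Theorem~3.1) gives the stated eigenvalue bound. Dividing the concentration bound by $\tfrac{e^\eps-1}{e^\eps+1}\nmintilde\lambda_B$, the two $\tfrac{e^\eps-1}{e^\eps+1}$ factors cancel and the $\sqrt{g_\eps(B)}$ emerges, producing the claimed bound $c_0\,\tfrac{2\sqrt{2kn\,g_\eps(B)}}{\nmintilde\lambda_B}$.

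The main obstacle is verifying that the variance calculation combines cleanly with the eigengap scaling to produce exactly $\sqrt{g_\eps(B)}$: the specific form of $g_\eps(B)$ in \eqref{eq:g-eps} is engineered so that the extra $\tfrac{1}{e^\eps+1}$ variance contribution from the privacy flips (which dominates in the sparse/small-$\eps$ regime) and the original signal variance both get hoisted through the eigengap denominator $\tfrac{e^\eps-1}{e^\eps+1}$ without leaving stray $\eps$-dependence. A secondary bit of care is the diagonal correction, since $E\,Y = P - \diag(P)$ rather than $P$; this is a rank-$n$ but norm-$O(\max B)$ perturbation that is absorbed into the universal constant.
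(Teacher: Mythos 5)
Your proposal is correct and follows essentially the same route as the paper's proof: the Davis--Kahan-type bound from Lei--Rinaldo, the eigenvalue lower bound $|\lambda_k(P_\downarrow)| \geq \tfrac{e^\eps-1}{e^\eps+1}\nmintilde\lambda_B$ via the factorization of $P$, and the concentration of $\M_\eps(Y)$ around its mean via Lei--Rinaldo's Theorem~5.2 with maximum edge probability $\mu = \tfrac{1}{e^\eps+1} + \tfrac{e^\eps-1}{e^\eps+1}\max B$, which your identity $\mu = \bigl(\tfrac{e^\eps-1}{e^\eps+1}\bigr)^2 g_\eps(B)$ captures exactly (the paper does this same algebra at the end, and handles the diagonal term by bounding it by $1 \leq (\log 2)^{-1/2}\sqrt{n\mu}$ rather than by $\max B$, which is equivalent). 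The only step stated a bit tersely is why $n\mu \gtrsim \log n$; the clean justification is that $\mu$ is a convex combination of $\tfrac12$ and $\max B$, hence $\mu \geq \min\{\tfrac12, \max B\} \geq \log n/n$.
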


\begin{proof}
By Lemma~5.1 of \citet{lei2015consistency}, there exists a $k \times k$ orthogonal matrix $Q$ such that:
$$
\begin{aligned}
\| \hat U_\downarrow - U_\downarrow Q \|_F &\leq \frac{2 \sqrt{2k}}{\lambda_{P_\downarrow}} \| Y_\downarrow - P_\downarrow \| \\
&= \frac{2 \sqrt{2k}}{\lambda_{P_\downarrow}} \| \Meps(Y) - E [\Meps(Y)] - \diag(P_\downarrow) \| \\
&\leq \frac{2 \sqrt{2k}}{\lambda_{P_\downarrow}} \left( \| \Meps(Y) - E [\Meps(Y)] \| + 1 \right)
\end{aligned}
$$
where the last inequality follows from the fact that $\| \diag(P_\downarrow) \| \leq 1$, as every entry of $P_\downarrow$ is bounded in the unit interval. From here, it remains to find a lower bound for $\lambda_{P_\downarrow}$ and an upper bound for $\| \Meps(Y) - E [\Meps(Y)] \|$.

We begin with the simpler task of bounding $\lambda_{P_\downarrow}$. Consider first the case when $\eps = \infty$. In this case, $P_\downarrow = P$. From the proof of \citet{lei2015consistency} Lemma~4.1, we know that the nonzero eigenvalues of $P$ are precisely those of $\Psi B \Psi$, where $\Psi = \diag( \sqrt{\tilde n_1}, \dots, \sqrt{\tilde n_k})$. Since both $B$ and $\Psi$ are symmetric and invertible, we can say that:
$$
\begin{aligned}
\| (\Psi B \Psi)^{-1} \|
& \leq \| \Psi^{-1} \| \| B^{-1} \| \| \Psi^{-1} \| \\
& = \lambda_\Psi^{-1} \lambda_B^{-1} \lambda_\Psi^{-1} \\
& = \nmintilde^{-1} \lambda_B^{-1} .
\end{aligned}
$$
The fact that $\| (\Psi B \Psi)^{-1} \|$ is the largest absolute eigenvalue of $(\Psi B \Psi)^{-1}$ further implies that the smallest absolute eigenvalue of $\Psi B \Psi$ satisfies:
$$
\lambda_P = \lambda_{\Psi B \Psi} \geq \nmintilde \lambda_B .
$$
When $\eps < \infty$, recall that $P_\downarrow = \frac{e^\eps - 1}{e^\eps + 1} P$, so:
$$
\lambda_{P_\downarrow} \geq \begin{cases}
    \nmintilde \lambda_B, & \eps = \infty \\
    \frac{e^\eps - 1}{e^\eps + 1} \nmintilde \lambda_B, & \eps < \infty
\end{cases} .
$$

Next, we can upper bound $\| \Meps(Y) - E [\Meps(Y)] \|$ using Theorem~5.2 of \citet{lei2015consistency}. First, we establish necessary bounds on the entries of $E[\Meps(Y)]$. Let:
$$
\mu = \begin{cases}
    \max B, & \eps = \infty \\
    \frac{e^\eps - 1}{e^\eps + 1} \left( \max B \right) + \frac 2{e^\eps + 1} \left(\frac 12 \right), & \eps < \infty
\end{cases} .
$$
In both cases, we can see that $\mu$ is an upper bound for $\max E[\Meps(Y)]$, per the definition of DCBM and Lemma~\ref{thm:mixture}. Additionally, we can view $\mu$ as an affine combination of $\max B$ and $\frac 12$, from which it is clear that $\mu \geq \min \{\frac 12, \max B \} \geq \log n/n$ (for $n \geq 1$). So by \citet{lei2015consistency} Theorem~5.2, with probability at least $1 - n^{-1}$:
$$
\| \Meps(Y) - E[\Meps(Y)] \| \leq C \sqrt{n \mu} ,
$$
where $C = C(1, 1)$ for $C(\cdot, \cdot)$ defined in \citet{lei2015consistency}. Combining some facts, we now have that with probability at least $1 - n^{-1}$:
$$
\| \hat U_\downarrow - U_\downarrow Q \|_F \leq 2 \sqrt{2k} \frac{C \sqrt{n \mu} + 1}{\lambda_{P_\downarrow}}.
$$
Since $\mu \geq \log n/n$, then $1 \leq (\log 2)^{-1/2} \sqrt{n \mu}$ (for $n > 1$). Thus if we let $c_0 = C + (\log 2)^{-1/2}$, we can simplify the above to:
$$
\| \hat U_\downarrow - U_\downarrow Q \|_F \leq c_0 \frac{2 \sqrt{2 k n \mu}}{\lambda_{P_\downarrow}}.
$$

Finally, it remains to put these results in terms of $g_\eps(B)$ and $\lambda_B$. When $\eps = \infty$, it is clear that:
$$
\frac{\sqrt{\mu}}{\lambda_{P_\downarrow}} \leq \frac{\sqrt{\max B}}{\nmintilde \lambda_B} = \frac{\sqrt{g_\eps(B)}}{\nmintilde \lambda_B} .
$$
When $\eps < \infty$, this same inequality holds, albeit with different intermediate steps:
$$
\begin{aligned}
\frac{\sqrt{\mu}}{\lambda_{P_\downarrow}}
& \leq \frac 1{\nmintilde \lambda_B} \frac{e^\eps + 1}{e^\eps - 1} \sqrt{ \frac{(e^\eps - 1) \max B + 1}{e^\eps + 1}} \\
& = \frac 1{\nmintilde \lambda_B} \sqrt{ \frac{e^\eps + 1}{e^\eps - 1} \left( \max B + \frac 1{e^\eps - 1} \right)} \\
& \leq \frac {\sqrt{g_\eps(B)}}{\nmintilde \lambda_B} .
\end{aligned}
$$
Thus, we conclude that with probability at least $1 - n^{-1}$:
$$
\| \hat U_\downarrow - U_\downarrow Q \|_F \leq c_0 \frac{2 \sqrt{2kn g_\eps(B)}}{\nmintilde \lambda_B} .
$$
\end{proof}

\subsection{The $k$-means and $k$-medians Problems}
\label{k-means-k-medians-problems}

Recall that Algorithm~\ref{alg:spectral-kmeans} requires solving an approximate $k$-means problem, while Algorithm~\ref{alg:spectral-normalized-kmedians} requires solving an approximate $k$-medians problem. The proofs of Theorems~\ref{thm:finitebounds} and \ref{thm:finiteboundsdcbm} rely on some additional notation and properties surrounding these problems.

Let $\mathbb G^{n \times k}$ denote the set of $n \times k$ membership matrices consisting of a single one in each row with zeroes elsewhere. In both the $k$-means and $k$-medians problems applied to the rows of the matrix $\hat U$, we seek a membership matrix $\Theta \in \mathbb G^{n \times k}$ and set of centroids $X \in \mathbb R^{k \times k}$ that minimize the distance $\| \Theta X - \hat U \|_*$ for a suitable norm $\| \cdot \|_*$. In $k$-means, the norm chosen is the Frobenius norm, while in the $k$-medians problem, the norm chosen is the (2,1) norm, $\| A \|_{2,1} = \sum_{i} \| A_{i*} \|_2$. Finding an exact solution for each of these problems is difficult, but efficient approximation algorithms exist. We will call a solution $\hat \Theta \hat X$ a $(1 + \gamma)$-approximate solution to the $k$-means problem if:
$$
\| \hat \Theta \hat X - \hat U \|_F^2 \leq (1+\gamma) \left[ \inf_{\Theta' \in \mathbb G^{n,k}, X'_{k \times k}} \| \Theta' X' - \hat U \|_F^2 \right] .
$$
Similarly, we will call $\hat \Theta \hat X$ a $(1 + \gamma)$-approximate solution to the $k$-medians problem if:
\begin{equation}
\label{eq:k-medians}
\| \hat \Theta \hat X - \hat U \|_{2,1} \leq (1+\gamma) \left[ \inf_{\Theta' \in \mathbb G^{n,k}, X'_{k \times k}} \| \Theta' X' - \hat U \|_{2,1} \right] .
\end{equation}
In the main text, we denote the membership parameter $\theta$ as a vector. One can easily convert a membership matrix $\Theta \in \mathbb G^{n \times k}$ to a membership vector $\theta \in [k]^n$ by choosing $\theta_i = \arg \max_j \Theta_{ij}$.

\subsection{SBM}

\begin{proof}[Proof of Theorem~\ref{thm:finitebounds}]
We begin by recalling that $\SBM(\theta, B) \overset D= \DCBM(\theta, \mathbf 1_n, B)$ (\ref{eq:sbm-dcbm-relation}), and so by Lemma~\ref{thm:frobenius}, there exists a universal constant $c_0$ and orthogonal matrix $Q$ such that with probability at least $1 - n^{-1}$:
\begin{equation}
\label{sbm-proof-condition}
\| \hat U_\downarrow - U_\downarrow Q \|_F \leq c_0 \frac{2 \sqrt{2kn g_\eps(B)}}{\nmin \lambda_B} .
\end{equation}

From here, the proof follows the same line of argument as \citet{lei2015consistency} Theorem~3.1 and Corollary~3.2. For completeness, and since our parameterization is a bit different, we include the remainder of the proof here.

Since $U_\downarrow = U$, we know from \citet{lei2015consistency} Lemma~3.1 that  $U_\downarrow Q = \Theta X'$ for some $\Theta \in \mathbb G^{n,k}$ and $X'_{k \times k}$ satisfying:
$$
\| X'_{j*} - X'_{\ell*} \|_2 = \sqrt{n_j^{-1} + n_\ell^{-1}}
$$
for $j \neq \ell$. Now we wish to apply Lemma~5.3 of \citet{lei2015consistency}. For $j \in [k]$, choose $\delta_j = \sqrt{ n_j^{-1} + (\max_{\ell \neq j} n_\ell)^{-1} }$, and define $S_j$ as in Lemma~5.3 of \citet{lei2015consistency}. We want to show that $(16 + 8 \gamma) \| \hat U_\downarrow - U_\downarrow Q \|_F^2 / \delta_j^2 < n_j$ for $j \in [k]$. Since $\delta_j^2 n_j > 1$ for all $j$, it suffices to show that $(16 + 8 \gamma) \| \hat U_\downarrow - U_\downarrow Q \|_F^2 \leq 1$. Under the condition that (\ref{sbm-proof-condition}) holds, we have that:
$$
(16 + 8 \gamma) \| \hat U_\downarrow - U_\downarrow Q \|_F^2 \leq \frac{64 c_0^2 (2 + \gamma) kn g_\eps(B)}{\nmin^2 \lambda_B^2}
$$
Choosing $c_1 = 64 c_0^2$, then (\ref{eq:sbm-condition}) implies that the above is $\leq 1$, as desired. Thus, for each community $j \in [k]$, the set of nodes that are possibly misclassified by Algorithm~\ref{alg:spectral-kmeans} must be a subset of $S_j$, and since $\delta_j^2 > n_j^{-1}$:
$$
\begin{aligned}
\sum_{j = 1}^k \frac{|S_j|}{n_j}
\leq \sum_{j = 1}^k |S_j| \delta_j^2
&\leq 4(4+2\gamma) \| \hat U_\downarrow - U_\downarrow Q \|_F^2 \\
&\leq 64 c_0^2 \frac{(2 + \gamma) kn g_\eps(B)}{\nmin^2 \lambda_B^2} \\
&= c_1 \frac{(2 + \gamma) kn g_\eps(B)}{\nmin^2 \lambda_B^2} .
\end{aligned}
$$
So then we have that:
$$
\begin{aligned}
\tilde L(\theta, \hat \theta_\eps) &\leq \max_{j \in k} \frac{|S_j|}{n_j} \leq c_1 \frac{(2 + \gamma) kn}{\nmin^2 \lambda_B^2} g_\eps(B) \\
\quad L(\theta, \hat \theta_\eps) &\leq \frac 1n \sum_{j = 1}^k |S_j| \leq c_1 \frac{(2 + \gamma) k \nmax'}{\nmin^2 \lambda_B^2} g_\eps(B) .
\end{aligned}
$$
\end{proof}

\subsection{DCBM}

\begin{proof}[Proof of Theorem~\ref{thm:finiteboundsdcbm}]
We begin with the results of Lemma~\ref{thm:frobenius}, which states that there exists a universal constant $c_0$ and a $k \times k$ orthogonal matrix $Q$ such that with probability at least $1 - n^{-1}$:
$$
\| \hat U_\downarrow - U_\downarrow Q \|_F \leq c_0 \frac{2 \sqrt{2kn g_\eps(B)}}{\nmintilde \lambda_B}.
$$

The remainder of the proof follows very closely from the proofs of Lemma~A.1 and Theorem~4.2 from \citet{lei2015consistency}. We define the sets $I_0$, $I_+$, and $S$ as in the original proofs. Recall that Algorithm~\ref{alg:spectral-normalized-kmedians} requires separate handling of zero vs. non-zero rows of $\hat U_\downarrow$. The set $I_0 = \{ i \in [n] \mid (\hat U_\downarrow)_{i*} = 0 \}$ holds the set of nodes whose embeddings are zero, and $I_+ = [n] \setminus I_0$ holds the remainder. We allow that the nodes in $I_0$ will be misclassified, and we define a set $S \subseteq I_+$ in which we also allow misclassification. Our goal, then, is to bound $|I_0 \cup S|$. We start by bounding $|I_0|$. Note that:
$$
\begin{aligned}
\| \hat U_\downarrow - U_\downarrow Q \|_F^2 &\geq \sum_{i \in I_0} \| (U_\downarrow Q)_{i*} \|_2^2 \\
&\geq \frac{|I_0|^2}{\sum_{i \in I_0} \|(U_\downarrow Q)_{i*}\|_2^{-2}} \quad \text{(Cauchy-Schwarz)}\\
&\geq \frac{|I_0|^2}{\sum_{i = 1}^n \|U_{i*}\|_2^{-2}} \quad \text( U_\downarrow = U, Q \text{ orthogonal)} \\
\end{aligned}
$$
From \citet{lei2015consistency} Lemma 4.1, we know that $\| U_{i*} \|_2^{-2} = \tilde n_{\theta_i} \psi_i^{-2}$, so $\sum_{i = 1}^n \|U_{i*}\|_2^{-2} = \sum_{j=1}^k n_j^2 \nu_j$. Thus:
$$
|I_0| \leq \| \hat U_\downarrow - U_\downarrow Q \|_F \sqrt{\sum_{j=1}^k n_j^2 \nu_j}.
$$

Moving on to $I_+$, we construct $\hat U'_\downarrow$ of size $|I_+| \times k$ to be the row-normalized version of $\hat U_\downarrow$, excluding zero rows, i.e., for $i = 1, \dots, |I_+|$ and $j = (I_+)_i$:
$$
(\hat U'_\downarrow)_{i*} = (\hat U_\downarrow)_{j*} / \|(\hat U_\downarrow)_{j*}\|_2 .
$$
Let $U_\downarrow'$ be the $n \times k$ row-normalized version of the expected embeddings $U_\downarrow$, where zero rows in $U_\downarrow$ are preserved as zero rows in $U_\downarrow'$. Then let $U_\downarrow''$ to be the $|I_+| \times k$ matrix constructed from the non-zero rows of $U_\downarrow'$, i.e., for $i = 1, \dots, |I_+|$ and $j = (I_+)_i$:
$$
(U_\downarrow'')_{i*} = (U_\downarrow')_{j*}.
$$
By (\ref{eq:k-medians}), if $\hat \Theta_+ \hat X$ is a $(1+\gamma)$-approximate solution to the $k$-medians problem on $\hat U_\downarrow$, then $\| \hat \Theta_+ \hat X - \hat U_\downarrow' \|_{2,1} \leq (1+\gamma) \| \hat U_\downarrow' - U_\downarrow'' Q \|_{2,1}$, and so:
$$
\begin{aligned}
\| \hat \Theta_+ \hat X - U_\downarrow'' Q \|_{2,1} &\leq \| \hat \Theta_+ \hat X - \hat U_\downarrow' \|_{2,1} + \| \hat U_\downarrow' - U_\downarrow'' Q \|_{2,1} \\
& \leq (2 + \gamma) \| \hat U_\downarrow' - U_\downarrow'' Q \|_{2,1}
\end{aligned}
$$
Using the fact that, for any vectors $v_1, v_2$ of equal dimension, $\| \frac{v_1}{\|v_1\|} - \frac{v_2}{\|v_2\|} \| \leq 2 \frac{ \| v_1 - v_2 \|}{\|v_1\|}$ \citep{lei2015consistency}, we can bound the above (2,1) norm as follows:
$$
\begin{aligned}
\| \hat U_\downarrow' - U_\downarrow'' Q \|_{2,1} &= \sum_{i \in I_+} \| (\hat U'_\downarrow)_{i*} - (U_\downarrow'' Q)_{i*} \|_2 \quad \text{(norm definition)} \\
&\leq 2 \sum_{i = 1}^n \frac{ \| (\hat U_\downarrow)_{i*} - (U_\downarrow Q)_{i*} \|_2 }{\| (U_\downarrow Q)_{i*} \|_2} \quad \text{(fact above)} \\
&= 2 \sqrt{ \left( \sum_{i = 1}^n \| (\hat U_\downarrow)_{i*} - (U_\downarrow Q)_{i*} \|_2 \| (U_\downarrow)_{i*} \|_2^{-1} \right)^2} \\
&\leq 2 \sqrt{ \left( \sum_{i = 1}^n \| (\hat U_\downarrow)_{i*} - (U_\downarrow Q)_{i*} \|_2^2 \right) \left( \sum_{i=1}^n \| (U_\downarrow)_{i*} \|_2^{-2} \right)} \quad \text{(Cauchy-Schwarz)} \\
&= 2 \sqrt{ \| \hat U_\downarrow - U_\downarrow Q \|_F^2 \left( \sum_{i=1}^n \| U_{i*} \|_2^{-2} \right)} \quad \text( U_\downarrow = U \text) \\
&= 2 \| \hat U_\downarrow - U_\downarrow Q \|_F \sqrt{\sum_{j=1}^k n_j^2 \nu_j} \\
\end{aligned}
$$

Let $S = \{ i \in I_+ : \| (\hat \Theta_+)_{i*} \hat X - (U_\downarrow'' Q)_{i*} \|_2 \geq 2^{-1/2} \}$. These are the nodes in $I_+$ for which the corresponding fitted cluster centroids are located more than $2^{-1/2}$ from their expectation (up to the transformation $Q$). Observe:
$$
2^{-1/2}|S| \leq \sum_{i \in I_+} \| (\hat \Theta_+)_{i*} \hat X - (U_\downarrow'' Q)_{i*} \|_2 = \| \hat \Theta_+ \hat X - U_\downarrow'' Q \|_{2,1}
$$
Thus we can bound $|S|$:
$$
\begin{aligned}
|S| &\leq \sqrt 2 \| \hat \Theta_+ \hat X - U_\downarrow'' Q \|_{2,1} \\
&\leq \sqrt 2 (2 + \gamma) \| \hat U_\downarrow' - U_\downarrow'' Q \|_{2,1} \\
&\leq 2 \sqrt 2 (2 + \gamma) \| \hat U_\downarrow - U_\downarrow Q \|_F \sqrt{\sum_{j=1}^k n_j^2 \nu_j}
\end{aligned}
$$
And so we can bound $|I_0 \cup S|$:
$$
|I_0 \cup S| = |I_0| + |S| \leq [1 + 2 \sqrt 2 (2 + \gamma)] \, \| \hat U_\downarrow - U_\downarrow Q \|_F \sqrt{\sum_{j=1}^k n_j^2 \nu_j}
$$
With probability $1 - n^{-1}$, this is further bounded:
$$
\begin{aligned}
|I_0 \cup S| &\leq [1 + 2 \sqrt 2 (2 + \gamma)] c_0 \frac{2 \sqrt{2kn g_\eps(B)}}{\nmintilde \lambda_B} \sqrt{ \sum_{j=1}^k n_j^2 \nu_j} \\
&\leq 8c_0 (2.5 + \gamma) \frac{\sqrt{kn g_\eps(B)}}{\nmintilde \lambda_B} \sqrt{ \sum_{j=1}^k n_j^2 \nu_j}
\end{aligned}
$$
Under the condition that the above is less than $n_\text{min}$, which is implied by (\ref{eq:dcbm-condition}), then for each $j \in [k]$, there must exist $i \in (C_j \setminus (I_0 \cup S))$---i.e., every block has at least one node whose normalized spectral embedding is within $2^{-1/2}$ of its expectation (in $\ell_2$ norm). Because the expected cluster centers $U''_{i*}$ are orthogonal and have unit norm, we have $\| U''_{i*} - U''_{j*} \|_2 = \sqrt 2$ whenever $\theta_i \neq \theta_j$ (or 0 otherwise). Thus for any two nodes $i, j \not \in I_0 \cup S$, we have:
$$
\begin{aligned}
(\hat \Theta_+)_{i*} = (\hat \Theta_+)_{j*} &\implies \| U''_{i*} - U''_{j*} \|_2 \\
& \quad \quad \quad \leq  \| (\hat \Theta_+)_{i*} \hat X - U'_{i*} Q \|_2 + \| (\hat \Theta_+)_{j*} \hat X - U'_{j*} Q \|_2 \\
& \quad \quad \quad < \sqrt  2 \\
&\implies U_{i*}'' = U_{j*}'' \\
&\implies \theta_i = \theta_j
\end{aligned}
$$
In other words, the set of nodes that are misclassified must be a subset of $I_0 \cup S$, and so $L(\theta, \hat \theta_\eps) \leq \frac{|I_0 \cup S|}{n}$. The final theorem statement is obtained by choosing $c_2 = 8 c_0$.
\end{proof}

\begin{proof}[Proof of Lemma~\ref{thm:dcbm-asymptotics}]
We begin by stating a few key facts under the assumed conditions:
$$
\begin{aligned}
\nu_j &= \left( \frac 1{n_j} \sum_{i \in C_j} \psi_i^2 \right) \left( \frac 1{n_j} \sum_{i \in C_j} \psi_i^{-2} \right) \in [a^2, a^{-2}] \\
\nmintilde &= \min_{j \in [k]} \sum_{i \in C_j} \psi_i^2 \in [a^2 \nmin, \nmin] \\
\nmin &= \Theta(n/k)
\end{aligned}
$$

From here, we apply Theorem~\ref{thm:finiteboundsdcbm}. First, we want to show that under the assumed conditions, (\ref{eq:dcbm-condition}) is satisfied for large $n$. A lower bound for the RHS of (\ref{eq:dcbm-condition}) is:
$$
c_2^{-1} \frac{\nmin}{\sqrt{ \sum_{j=1}^k n_j^2 \nu_j }} 
= \Omega \left( \frac{n/k}{\sqrt{ k (n/k)^2 a^{-2}}} \right) 
= \Omega \left( k^{-1/2} a \right)
$$
Then looking at the LHS of (\ref{eq:dcbm-condition}), we can write:
$$
\frac{(2.5 + \gamma) \sqrt{kn \, g_\eps(B)}}{\tilde n_\text{min} \lambda_B}
= O \left( \frac{\sqrt{kn \, g_\eps(B)}}{a^2(n/k) \lambda(B)} \right) 
= O \left( \frac{k^{3/2} \sqrt{g_\eps(B)}}{a^2 \lambda(B) \sqrt n} \right)
$$
Thus Theorem~\ref{thm:finiteboundsdcbm} is applicable for large $n$ if:
$$
\frac{k^{3/2} \sqrt{g_\eps(B)}}{a^2 \lambda(B) \sqrt n} = o \left( k^{-1/2} a \right)
$$
which is satisfied by assumption. Thus we conclude that with probability $1 - n^{-1}$:
$$
\begin{aligned}
L(\theta, \hat \theta_\eps)
&\leq c_2 \frac{(2.5 + \gamma)}{\tilde n_\text{min} \lambda_B} \sqrt{\frac kn \left( \sum_{j = 1}^k n_j^2 \nu_j \right) g_\eps(B)} \\
&= O \left( \frac{1}{a^2 (n/k) \lambda_B} \sqrt{\frac kn (k(n/k)^2 a^{-2}) g_\eps(B)} \right) \\
&= O \left( \frac{k \sqrt{g_\eps(B)}}{a^3 \lambda_B \sqrt{n}} \right) .
\end{aligned}
$$
\end{proof}

\subsection{Miscellaneous}

Below we prove a useful inequality for $g_\eps(B)$ claimed in Section~\ref{theoretical-results}.

\begin{fact}
\label{thm:g-inequality}
Let $k \in \mathbb N, \eps \in (0, \infty), B \in [0, 1]^{k \times k}$. Then:
$$
g_\infty(B) < g_\eps(B) \leq \max B + 3 \zeta_\eps^{-1} + 2 \zeta_\eps^{-2}
$$
where $\zeta_\eps = e^\eps - 1$.
\end{fact}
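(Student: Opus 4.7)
The plan is to translate the definition of $g_\eps(B)$ into the variable $\zeta_\eps = e^\eps - 1$ and then expand the product into four explicit terms. Observe that $e^\eps - 1 = \zeta_\eps$ and $e^\eps + 1 = \zeta_\eps + 2$, so
$$
g_\eps(B) = \frac{\zeta_\eps + 2}{\zeta_\eps}\left(\max B + \frac{1}{\zeta_\eps}\right) = \left(1 + \frac{2}{\zeta_\eps}\right)\left(\max B + \frac{1}{\zeta_\eps}\right).
$$
Distributing gives
$$
g_\eps(B) = \max B + \frac{1}{\zeta_\eps} + \frac{2 \max B}{\zeta_\eps} + \frac{2}{\zeta_\eps^2}.
$$

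For the lower bound, note that $\eps \in (0,\infty)$ forces $\zeta_\eps > 0$, and $\max B \geq 0$, so each of the three additional terms is strictly positive. Hence $g_\eps(B) > \max B = g_\infty(B)$, which is the desired strict inequality.

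For the upper bound, use the hypothesis $B \in [0,1]^{k \times k}$, so $\max B \leq 1$ and consequently $2 \max B / \zeta_\eps \leq 2 / \zeta_\eps$. Substituting this bound into the expansion yields
$$
g_\eps(B) \leq \max B + \frac{1}{\zeta_\eps} + \frac{2}{\zeta_\eps} + \frac{2}{\zeta_\eps^2} = \max B + 3 \zeta_\eps^{-1} + 2 \zeta_\eps^{-2},
$$
completing the proof. The entire argument is purely algebraic, so there is no genuine obstacle; the only thing to watch is correctly combining the coefficient $2 \max B / \zeta_\eps$ with the $1/\zeta_\eps$ term to produce the coefficient $3$ in the stated bound.
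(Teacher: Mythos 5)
Your proof is correct and follows essentially the same route as the paper: rewrite $\tfrac{e^\eps+1}{e^\eps-1}$ as $1 + 2\zeta_\eps^{-1}$, expand the product into four terms, bound $2\max B/\zeta_\eps \leq 2/\zeta_\eps$ using $\max B \leq 1$, and get the strict lower bound from positivity of the extra terms. One hair to split: the term $2\max B/\zeta_\eps$ is only nonnegative (it vanishes if $\max B = 0$), but the strict inequality still holds because $\zeta_\eps^{-1}$ and $2\zeta_\eps^{-2}$ are strictly positive, which is exactly what the paper relies on.
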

\begin{proof}
Note first that:
$$
\frac{e^\eps + 1}{e^\eps - 1} = \frac{e^\eps - 1}{e^\eps - 1} + \frac{2}{e^\eps - 1}  = 1 + 2 \zeta_\eps^{-1} .
$$
Thus:
$$
\begin{aligned}
g_\eps(B) & = \frac{e^\eps + 1}{e^\eps - 1} \left( \max B + \frac{1}{e^\eps - 1} \right) \\
& = (1 + 2 \zeta_\eps^{-1})(\max B + \zeta_\eps^{-1}) \\
& = \max B + \zeta_\eps^{-1} + 2 \zeta_\eps^{-1} \max B + 2 \zeta_\eps^{-2} \\
& \leq \max B + 3 \zeta_\eps^{-1} + 2 \zeta_\eps^{-2}
\end{aligned}
$$
where the last line follows from the fact that $\max B \leq 1$. The fact that $g_\infty(B) < g_\eps(B)$ also follows from the above, as $\zeta_\eps$ (and thus $\zeta_\eps^{-1}$, $\zeta_\eps^{-2}$) is strictly positive.
\end{proof}

\subsection{Privacy Guarantee}
\label{privacy-proof}

Finally, for completeness, we conclude with a formal proof of the privacy guarantee.

\begin{proof}[Proof of Theorem~\ref{thm:privacy}]
Let $Y, Y'$ be two binary, undirected networks of $n$ nodes differing on one edge, $(i, j)$ with $1 \leq i < j \leq n$. Then for all $k \neq i$, we have that $\M_k(Y_{k*}) \overset D = \M_k(Y'_{k*})$. Thus, all that remains to show is that for any $a \in \{0, 1\}^n$:
$$
P(\M_i(Y_{i*}) = a) \leq e^\eps P(\M_i(Y'_{i*}) = a).
$$
To simplify notation, we will write $\M_i$ in another form. Let:
$$
\M'(x) = \begin{cases}
    1 - x & \text{w.p.}\quad  \frac{1}{1 + e^\eps} \\
    x & \text{w.p.} \quad \frac{e^\eps}{1 + e^\eps}
\end{cases} .
$$
Then we can write:
$$
\M_i(Y_{i*}) = \left( \underbrace{0, \dots, 0}_{i \text{ entries}}, \; \M'(Y_{i,i+1}), \dots, \M'(Y_{in}) \right) ,
$$
where the $\M'(\cdot)$ are taken independently. Since the first $i$ entries of $\M_i(Y_{i*})$ are deterministic, we can safely restrict our attention to the case when $a_k = 0$ for $k \leq i$ (since otherwise we have $P(\M_i(Y_{i*}) = a) = P(\M_i(Y'_{i*}) = a) = 0$). Thus:
$$
\begin{aligned}
\frac{P(\M_i(Y_{i*}) = a)}{P(\M_i(Y'_{i*}) = a)}
&= \frac{\prod_{\ell = i+1}^n P(\M'(Y_{i\ell}) = a_\ell)}{\prod_{\ell = i+1}^n P(\M'(Y_{i\ell}) = a_\ell)} \\
&= \frac{P(\M'(Y_{ij}) = a_j)}{P(\M'(Y'_{ij}) = a_j)} \\
&\leq \frac{P(\M'(1) = 1)}{P(\M'(0) = 1)} \\
&= e^\eps ,
\end{aligned}
$$
where the inequality is taken by considering all combinations of $Y_{ij}$, $Y'_{ij}$, and $a_j$.
\end{proof}

\section*{Acknowledgements}

Computations for this research were performed on the Pennsylvania State University's Institute for Computational and Data Sciences' Roar supercomputer. This work was supported in part by NSF Award No. SES-1853209 to The Pennsylvania State University.

\bibliography{sources}
\bibliographystyle{agsm}

\end{document}